
\documentclass[12pt, a4paper]{article}
\usepackage[cp1251]{inputenc}
\usepackage[english]{babel}
\usepackage{amssymb,amsmath,amsthm,amscd}

\textheight 24.5cm
\textwidth 16cm
\oddsidemargin +1cm
\topmargin -1cm

\renewcommand\refname{\centering REFERENCES}

\theoremstyle{plain}
\newtheorem{thm}{Theorem}
\newtheorem{lem}{Lemma}
\newtheorem{cor}{Corollary}

\theoremstyle{definition}
\newtheorem{rem}{Remark}

\numberwithin{equation}{section}

\numberwithin{thm}{section}

\numberwithin{lem}{section}

\numberwithin{cor}{section}


\numberwithin{rem}{section}

\sloppy

\begin{document}
	
	\renewcommand\refname{\centering REFERENCES}
	
	\renewcommand\proofname{\centering Proof}

	\begin{center}
		{\Large\bf On  the Commutant of 
			the Generalized Backward Shift Operator
			in Weighted Spaces of Entire Functions}
	\end{center}

	\begin{center}
		{\Large Olga~A.~Ivanova, Sergej~N.~Melikhov}
	\end{center}
	
	\bigskip
	
	{\small {\bf Abstract.} We investigate continuous linear operators, which commute 
		with the generalized backward shift operator
		(a one-dimensional perturbation of the Pommiez operator) in a countable inductive limit
		$E$ of weighted Banach spaces of entire functions. This space $E$ is isomorphic with the help of
		the Fourier-Laplace transform to the strong dual of the Fr\'echet space of all holomorphic
		functions on a convex domain $Q$ in the complex plane, containing the origin. 
		Necessary and sufficient conditions are obtained that an operator of the mentioned commutant
		is a topological isomorphism of $E$. The problem of the factorization of nonzero operators of this commutant is investigated.
		In the case when the function, defining the generalized backward shift operator, has zeros in $ Q $, they are divided into two classes:
		the first one consists of isomorphisms and surjective operators with a finite-dimensional kernel,
and the second one contains finite-dimensional operators.
		Using obtained results, we study the generalized Duhamel product in
		Fr\'echet space of all holomorphic functions on $Q$.
		
		\bigskip
		{\bf Mathematical Subject Classification (2010).} Primary 46E10, 47B37; Secondary 47A05, 30D15.  
		
		\medskip
		{\bf Keywords.} Backward shift operator, commutant, weighted space of entire functions,
		Duhamel product.
	}
	
	\bigskip

%
%
%
%
%
%
%
%
%

\section{Introduction}

Let $Q$ be a convex domain in $\mathbb C$, containing the origin; $H(Q)$ be the Fr\'echet space of all
holomorphic functions on $Q$; $E$ be a countable inductive limit of weighted Banach spaces which with the help of
Fourier-Laplace transform is topologically isomorphic to the strong dual of $H(Q)$. A function $g_0\in E$ satisfying the condition
$g_0(0)=1$
defines the generalized backward shift operator $D_{0,g_0}(f)(t)=\frac{f(t)-g_0(t)f(0)}{t}$, which is continuous
and linear in $E$. If $g_0\equiv 1$, then $D_{0,g_0}$ is the usual backward shift operator
(Pommiez operator) $D_0$. In the general case $D_{0,g_0}$ is a one-dimensional perturbation of $D_0$.

The problem, which we solve in this article, is to investigate the structure of the set $\mathcal K(D_{0,g_0})$
of all continuous linear operators in $E$, which commute with $D_{0,g_0}$ in $E$. The set
$\mathcal K(D_{0,g_0})$ has been described in \cite{AA}. In main
results we assume that the 
function $g_0$ has a finite number of zeros or has no zeros, i.\,e.,
$g_0(z)=P(z) e^{\lambda z}$ for some $\lambda\in Q$ and some polynomial $P$, such that $P(0)=1$. 
In Theorems \ref{ISOMORPHISM} and \ref{RASCHEPL} it is shown, that $\mathcal K(D_{0,g_0})$ is divided into two classes.
The first one consists of isomorphisms and surjective operators with a finite-dimensional kernel,
and the second one contains finite-dimensional operators. If $g_0$ has no zeros, i.\,e., $g_0(z)=e^{\lambda z}$
for some $\lambda\in Q$, then the second class is empty.
Previously V.A. Tkachenko \cite{TKACH} investigated properties of the commutant of the operator of generalized integration in
a space of analytic functionals. This space is the dual of a countable inductive limit of weighted Banach
spaces of entire functions, the growth of which is defined by a $\rho$-trigonometrically convex function
($\rho>0$). The operator of generalized integration is the adjoint map
of $D_{0,g_0}$, defined by the function $g_0=e^{ \mathcal P}$ for some polynomial $\mathcal P$. Such function $g_0$
has no zeros.

In the dual $E'$ of $E$ shift operators for $D_{0,g_0}$ define a product $\otimes$
by the convolution rule. If we identify the strong dual of $E$ with $H(Q)$ with the help of the adjoint
map of the Fourier-Laplace transform, the operation $\otimes$ is realized in  $H(Q)$ 
as the generalized Duhamel product. In the case of $g_0\equiv 1$ it coincides with the Duhamel product
(with the derivative of the Mikusinski convolution product). The Duhamel product
is closely related to the Volterra operator. It is being studied quite intensively
(see the paper of M.T. Karaev \cite{KARAEV}). This multiplication is used in the theory of ordinary
differential equations with constant coefficients, in the boundary value problem
of mathematical physics
(in the sloping beach problem), in the spectral theory of direct sums of operators.
Investigations of the Duhamel product in the space of all holomorphic functions on a domain in $\mathbb C$
go back to N. Wigley \cite{WIGLEY}.
In this article we prove the criterion that the multiplication operator, which is defined by generalized
Duhamel product, is an isomorphism of $H(Q)$.

Note that the situation, when $g_0$ has zeros, differ significantly from one when $g_0$ has no zeros.
Namely, our proofs use essentially the description of the lattice of proper closed $ D_{0, g_0} $-invariant
subspaces of $E$, obtained in \cite{ITOGI17}. If $g_0$ has no zeros, $D_{0,g_0}$
is unicellular. If $g_0$ has zeros, then 
this lattice is not linearly ordered, 
moreover, the family of finite-dimensional closed $D_{0, g_0}$-invariant subspaces of $H(Q)$
is also not linearly ordered. 
The mixed structure of this lattice implies the existence of two "extreme" \, 
subsets of $\mathcal K(D_{0, g_0})$.


\section{Preliminary information}

Let $Q$ be a convex domain in $\mathbb C$, containing the origin; $(Q_n)_{n\in\mathbb N}$ be a sequence
of convex compact subsets of $Q$, such that $Q_n\subset{\rm int}\,Q_{n+1}$, $n\in\mathbb N$,
and $Q=\bigcup\limits_{n\in\mathbb N} Q_n$. The symbol ${\rm int}\,M$ denotes  the interior of a set $M\subset\mathbb C$
in $\mathbb C$. For a bounded set $M\subset\mathbb C$ let $H_M$ be
the support function of $M$: $H_M(z):=\sup\limits_{t\in\mathbb C}{\rm Re}(zt)$, $z\in\mathbb C$.
We set $H_n:=H_{Q_n}$, $n\in\mathbb N$.

Define weighted Banach spaces 
$$
E_{n}:=\left\{ f\in H(\mathbb C)\,|\,
\|f\|_{n}:=\sup\limits_{z\in\mathbb C}\frac{|f(z)|}{\exp(H_n(z))}<+\infty\right\}, \,n\in\mathbb N.
$$
Here $H(\mathbb C)$ is  the space of all entire functions on
$\mathbb C$.
Note that $E_n$ is embedded continuously in $E_{n+1}$ for each 
$n\in \mathbb N$.
Put $E:=\bigcup\limits_{n\in\mathbb N} E_{Q,n}$ and we 
endow $E$ with the
topology of the inductive limit of the sequence of Banach spaces 
$E_n$, $n\in\mathbb N$, with respect to embeddings $E_n$ in
$E$ (see \cite[Ch.~III, \S~24]{MEIVOGT}): \, $E:=\mathop{\rm ind}\limits_{n
	\rightarrow} E_n$.

Let $H(Q)$ be the space of all holomorphic functions on $Q$ with the compact convergence topology.
For a locally convex space $F$ we denote by $F'$
the dual of $F$. We put $e_{z}(t):=e^{z t}$, $z, t\in\mathbb C$.
The Fourier-Laplace transform $\mathcal F(\varphi)(z):=\varphi(e^{z})$, $z\in\mathbb C$,
$\varphi\in H(Q)'$, is a topological isomorphism of the strong dual of $H(Q)$ on $E$
\cite[Theorem 4.5.3]{HERM}.

Fix a function $g_0\in E$ with $g_0(0)=1$. The generalized backward shift operator is defined by
$$
D_{0,g_0}(f)(t):=\left\{
\begin{array}{cc}
\frac{f(t)-g_0(t)f(0)}{t}, & t\ne 0,\\
f'(0)-g_0'(0)f(0), & t=0,
\end{array}
\right.
$$
$f\in E$.
Following \cite{BINDERMAN}, \cite{Dimovsky}, we introduce
{\it shift operators for the operator} $D_{0,g_0}$ 
$$
T_{z,g_0}(f)(t):=\left\{
\begin{array}{cc}
\frac{tf(t)g_0(z)-zf(z)g_0(t)}{t-z}, & \, t\ne z,\\
zg_0(z)f'(z)-zf(z)g'_0(z)+f(z)g_0(z), & t=z,
\end{array}
\right.
$$
$z\in\mathbb C$,
$f\in E$. Set
$$
\widetilde T_{z,g_0}(f)(t):=\left\{
\begin{array}{cc}
\frac{f(t)g_0(z)-f(z)g_0(t)}{t-z}, & \, t\ne z,\\
g_0(z)f'(z)-f(z)g'_0(z), & t=z,
\end{array}
\right.
$$
$f\in E$, $z\in\mathbb C$.
For $z\in\mathbb C$ the Pommiez operators $D_z$ 
are defined by
$$
D_z(f)(t):=\left\{
\begin{array}{cc}
\frac{f(t)-f(z)}{t-z}, & t\ne z,\\
f'(z),& t=z,
\end{array}
\right.
$$
$f\in E$.
All operators $T_{z,g_0}$, $\widetilde T_{z,g_0}$, $D_z$, $z\in\mathbb C$, 
are continuous and linear in $E$.

For an integer $n\ge 0$ by $\mathbb C[z]_n$ we denote the space of polynomials of degree
at most $n$. Note that ${\rm Ker}\,D_{0,g_0}^n=g_0\mathbb C[z]_{n-1}$ for all 
$n\in\mathbb N$.

With the help of shift operators for $D_{0,g_0}$ in $E'$ one can define a multiplication $\otimes$
by $(\varphi\otimes\psi)(f)=\varphi_z(\psi(T_{z,g_0}(f))$, \,$\varphi, \psi\in E'$, $f\in E$.
By  \cite[2.2]{AA} the space $E'$ is an associative and
commutative algebra with the multiplication $\otimes$.

Let $\mathcal K(D_{0,g_0})$ be the set of all continuous linear operators $B$ in $E$, such that
$BD_{0,g_0}=D_{0,g_0} B$ in $E$. It is an algebra with composition of the operators with the 
rule of the multiplication. Note that $T_{z,g_0}\in \mathcal K(D_{0,g_0})$ for every $z\in\mathbb C$.
For a functional $\varphi\in E'$ we define the operator 
$B_\varphi(f)(z):=\varphi\left(T_{z,g_0}(f)\right)$, $z\in\mathbb C$, $f\in E$.
It is continuous and linear in $E$. 

In \cite[Lemma 17]{AA} the following result is proved:
\begin{thm} \label{ALGEBRA}
	The map $\varphi\mapsto B_\varphi$ is an isomorphism of the algebra $(E',\otimes)$ onto
	$\mathcal K(D_{0,g_0})$.
\end{thm}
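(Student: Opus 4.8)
The plan is to verify that $\varphi\mapsto B_\varphi$ is a linear bijection between $E'$ and $\mathcal K(D_{0,g_0})$ which intertwines $\otimes$ with composition, and everything will hinge on two elementary observations. First, since $g_0(0)=1$ one checks directly from the definition that $T_{0,g_0}=\mathrm{id}_E$, so that $B_\varphi(f)(0)=\varphi(T_{0,g_0}(f))=\varphi(f)$ for all $f\in E$. Thus $\varphi$ is recovered from $B_\varphi$ by evaluation at $0$, which already gives linearity and injectivity of the map. Second, I will prove the following \emph{separation lemma}: if $h\in E$ satisfies $D_{0,g_0}^n(h)(0)=0$ for every integer $n\ge 0$, then $h=0$. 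This is the technical heart of the argument.

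For the separation lemma I would argue by induction on the order of vanishing of $h$ at the origin. The hypothesis for $n=0$ gives $h(0)=0$; but whenever $h(0)=0$ the term $g_0(t)h(0)$ disappears and $D_{0,g_0}(h)(t)=h(t)/t$, so inductively, once $h$ vanishes to order $n$ at $0$ one has $D_{0,g_0}^n(h)(t)=h(t)/t^n$ and hence $D_{0,g_0}^n(h)(0)=h^{(n)}(0)/n!$. The standing hypothesis then forces $h^{(n)}(0)=0$ for every $n$, and since $h$ is entire, $h\equiv 0$.

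Next I would confirm $B_\varphi\in\mathcal K(D_{0,g_0})$ (continuity and linearity being already noted above). For fixed $z\ne 0$, applying $D_{0,g_0}$ in the variable $z$ to $g(z):=B_\varphi(f)(z)$ and using $g(0)=\varphi(f)$ gives $D_{0,g_0}(B_\varphi f)(z)=\varphi\big(z^{-1}(T_{z,g_0}(f)-g_0(z)f)\big)$. A direct computation from the definitions establishes the two intertwining identities $z^{-1}(T_{z,g_0}(f)-g_0(z)f)=\widetilde T_{z,g_0}(f)$ and $\widetilde T_{z,g_0}(f)=T_{z,g_0}(D_{0,g_0}(f))$, the $f(0)$-terms cancelling in the latter. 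Combining them yields $D_{0,g_0}(B_\varphi f)(z)=\varphi(T_{z,g_0}(D_{0,g_0}f))=B_\varphi(D_{0,g_0}f)(z)$; the case $z=0$ follows by continuity. Hence $B_\varphi$ commutes with $D_{0,g_0}$.

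Surjectivity and the homomorphism property then follow formally from the separation lemma. Given $A\in\mathcal K(D_{0,g_0})$, put $\varphi_A(f):=A(f)(0)\in E'$ and set $C:=A-B_{\varphi_A}\in\mathcal K(D_{0,g_0})$; by the first observation $C(f)(0)=0$ for all $f$. Since $C$ commutes with $D_{0,g_0}$, for each fixed $f$ the function $h:=C(f)$ obeys $D_{0,g_0}^n(h)(0)=C(D_{0,g_0}^n f)(0)=0$, so the separation lemma gives $C(f)=0$; thus $A=B_{\varphi_A}$, proving surjectivity. Applying this to $A=B_\varphi\circ B_\psi\in\mathcal K(D_{0,g_0})$ and computing $A(f)(0)=B_\varphi(B_\psi f)(0)=\varphi\big(t\mapsto\psi(T_{t,g_0}(f))\big)=(\varphi\otimes\psi)(f)$ identifies $B_\varphi\circ B_\psi=B_{\varphi\otimes\psi}$. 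I expect the main obstacle to be the separation lemma together with the clean verification of the two shift-operator identities; once these are in place, the bijectivity and multiplicativity are automatic.
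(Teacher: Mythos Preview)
Your argument is correct and self-contained. Note, however, that the paper does not actually prove this theorem: it is quoted verbatim as a known result from \cite[Lemma~17]{AA}, so there is no in-paper proof to compare against. What you have written is presumably close in spirit to the original proof in \cite{AA}: the key identities $T_{0,g_0}=\mathrm{id}_E$, $T_{z,g_0}(f)-g_0(z)f=z\,\widetilde T_{z,g_0}(f)$, and $\widetilde T_{z,g_0}(f)=T_{z,g_0}(D_{0,g_0}f)$ are exactly the algebraic relations one needs, and your separation lemma (that an entire $h$ with $D_{0,g_0}^n(h)(0)=0$ for all $n$ must vanish) is the natural replacement for density of polynomials. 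The recovery $\varphi=\delta_0\circ B_\varphi$ and the surjectivity/uniqueness step via $C:=A-B_{\varphi_A}$ are clean; the only point worth making explicit in a final write-up is that $\varphi_A:=\delta_0\circ A$ is indeed in $E'$ because point evaluation at $0$ is continuous on each $E_n$ and hence on the inductive limit.
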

From Theorem \ref{ALGEBRA} it follows that the algebra $\mathcal K(D_{0,g_0})$
is commutative.

From the commutativity of $\otimes$ it follows that for each $\varphi\in E'$ the
convolution operator $S_\varphi:E'\to E'$, $\psi\mapsto\varphi\otimes\psi$, is the adjoint
map of $B_\varphi:E\to E$ with respect to dual system $(E, E')$.

\begin{rem} \label{supportfunction}
	We will use the following well known properties of support functions $H_n$:
	
	\begin{itemize}
		\item[(i)] For each $n\in\mathbb N$ there is $\varepsilon>0$, such that
		$$
		\sup\limits_{z\in\mathbb C}\left(H_n(z) +\varepsilon|z|- H_{n+1}(z)\right)<+\infty.
		$$
				
		\item[(ii)] 
		$\lim\limits_{|t|\to+\infty}\left(\left(\sup\limits_{|\xi-t|\le \delta}H_n(\xi)\right) - H_{n+1}(t)\right)=-\infty$  
for each $n\in\mathbb N$ and $\delta>0$.
			\end{itemize}
\end{rem}

Let $\mathcal F^t: E'\to H(Q)$ be the adjoint map of $\mathcal F: H(Q)'\to E$
with respect to dual systems $(H(Q)', H(Q))$ and $(E', E)$. 
Then $\mathcal F^t(\varphi)(z)=\varphi\left(e_z\right)$,
\, $z\in Q$, $\varphi\in E'$. In addition, $\mathcal F^t$ is a topological isomorphism of the strong dual
of $E$ onto $H(Q)$ (see \cite[3.2]{CAOT}).
We will write $\widehat\varphi:=\mathcal F^t(\varphi)$ for $\varphi\in E'$.

By $M$ denote the operator of multiplication by the independent variable.

\begin{rem} \label{INTERPOLATING}
	(i) By \cite[Lemma 14]{CAOT} the equality
	$$
	T_{z,g_0}(f)=g_0(z)D_z(M(f))-M(f)(z)D_z(g_0), \,\,z\in\mathbb C, f\in E, 
	$$
	holds.
	
	\noindent
	(ii) For $f\in E\backslash\{0\}$, $h\in H(Q)$ let $\omega_f(z,h)$ be the Leont'ev's interpolating function
	(see \cite{LEONTEV}). 
	Using the equality \cite[Example 1]{Ufa}
	$\omega_f(z,h)={\mathcal F}^{-1}(D_z(f))(h)$, $z\in\mathbb C$, 
		we rewrite the equality in (i) for $\varphi\in E'$ as follows:
	\begin{equation} \label{SOOTNOSHENIE}
		B_\varphi(f)(z)=g_0(z)\omega_{M(f)}(z,\widehat\varphi) - M(f)(z)\omega_{g_0}(z,\widehat\varphi),
		\,\, z\in\mathbb C,\, f\in E\backslash\{0\}.
	\end{equation}
	
	\noindent
	(iii) Let $f\in E\backslash\{0\}$, $\lambda\in\mathbb C$ and $f(\lambda)=0$. By \cite[Lemma 2]{LEONTEV}
	for the function $f_1(t):=\frac{f(t)}{t-\lambda}$ 
	\begin{equation} \label{SOOTNOSHENIE2}
		\omega_f(z,h)=(z-\lambda)\omega_{f_1}(z, h) - \frac{1}{2\pi i}\int\limits_C \gamma_f(t) h(t) dt,
		\,\, z\in\mathbb C,\, h\in H(Q).
	\end{equation}
	Here $C$ is a closed convex curve in $Q$, which surrounds 
	the conjugate diagram of $f$, 
	$\gamma_f$ is the Borel transform of $f$.
	
	\noindent
	(iv) For each $\varphi\in E'$, $f\in E$ the equality
	$\varphi(f)=\frac{1}{2\pi i}\int\limits_C \gamma_f(t)\widehat\varphi(t) dt$ holds,
	where $C$ is a closed convex curve in $Q$, which surrounds 
	the conjugate diagram of $f$.
\end{rem}


The main aim of this article is to describe operators of $\mathcal K(D_{0,g_0})$,
which are an isomorphism of $E$, and to classify operators of
$\mathcal K(D_{0,g_0})$, which are not isomorphism of $E$.

\section{Auxiliary results}

We put  
$B_n:=\{f\in E_n\,|\, \|f\|_n\le 1\}$ and
$\|\varphi\|_n^*:=\sup\limits_{f\in B_n}|\varphi(f)|$, \,$\varphi\in E'$, $n\in\mathbb N$.

For $\varphi\in E'$ the operator
$A_\varphi(f)(z):=\varphi_t\left(t\widetilde T_{z,g_0}(f)(t)\right)$, $z\in\mathbb C$, $f\in E$,
is continuous and linear in $E$.
For each $\varphi\in E'$ the equality
$B_\varphi(f)=\varphi(g_0)f + A_\varphi(f)$, $f\in E$, holds.
It allows to study of properties
$ B_\varphi$, using the theory of compact operators in Banach spaces.
A key to this is the following result.

\begin{lem} \label{compact} Let $g_0\in E_m$ for some $m\in\mathbb N$.
	For each functional $\varphi\in E'$, each $n\ge m$ the operator $A_\varphi$ is compact in $E_n$.
\end{lem}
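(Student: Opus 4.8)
The plan is to prove that the image $A_\varphi(B_n)$ of the unit ball is relatively compact in the Banach space $E_n$, which is equivalent to compactness of $A_\varphi$ on $E_n$. I would reduce this to two properties of the family $\mathcal H:=\{A_\varphi(f):f\in B_n\}$ of entire functions: (a) local boundedness (hence normality, by Montel), and (b) a uniform decay at infinity, namely $\sup_{f\in B_n}|A_\varphi(f)(z)|\exp(-H_n(z))\to 0$ as $|z|\to\infty$. Granting (a) and (b), relative compactness in $E_n$ follows by a standard argument: from any sequence one extracts a locally uniformly convergent subsequence (Montel), and then (b) together with $H_n(z)\ge 0$ (since $0\in Q_n$) controls the $\|\cdot\|_n$-norm of the tail $\{|z|\ge R\}$, so that the convergence is actually in $E_n$. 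Property (a) is easy: writing $A_\varphi(f)(z)=\varphi_t(h_z)$ with $h_z(t):=t\widetilde T_{z,g_0}(f)(t)$, one has $|A_\varphi(f)(z)|\le\|\varphi\|_p^*\,\|h_z\|_p$ for a suitable $p$, and $\|h_z\|_p$ is bounded uniformly for $f\in B_n$ and $z$ in a compact set.

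The real content is (b). Here the naive triangle-inequality bound on $\widetilde T_{z,g_0}(f)(t)=\frac{f(t)g_0(z)-f(z)g_0(t)}{t-z}$ is useless: each summand grows in $z$ far faster than $\exp(H_n(z))$, and the estimate rests entirely on the cancellation encoded by the factor $1/(t-z)$. The first step is to note that, because $g_0\in E_m$ with $m\le n$, the function $h_z$ is for each fixed $z$ entire in $t$ of exponential type with conjugate diagram contained in $Q_n$, uniformly in $z$; indeed $|h_z(t)|\le C_z\exp(H_n(t))$ for large $|t|$ (the polynomial factor $t$ does not enlarge the diagram, and the diagram of $g_0$ lies in $Q_m\subset Q_n$). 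This is exactly where the hypothesis $n\ge m$ is used. Hence, fixing once and for all a closed contour $C\subset Q$ surrounding $Q_n$ (possible since $Q_n\subset\operatorname{int}Q_{n+1}\subset Q$), Remark \ref{INTERPOLATING}(iv) gives $A_\varphi(f)(z)=\frac{1}{2\pi i}\int_C\gamma_{h_z}(\tau)\,\widehat\varphi(\tau)\,d\tau$ with $C$ and $\widehat\varphi$ independent of $z$ and $f$, so that $|A_\varphi(f)(z)|\le C_1\max_{\tau\in C}|\gamma_{h_z}(\tau)|$.

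To estimate the Borel transform $\gamma_{h_z}$ I would exploit the algebraic identity $(t-z)h_z(t)=N_z(t)$, where $N_z:=g_0(z)\,M(f)-f(z)\,M(g_0)$ is entire with $N_z(z)=0$ and conjugate diagram in $Q_n$. Passing to Borel transforms and using $\gamma_{M(v)}=-\gamma_v'$ turns this into the first-order linear ODE $\gamma_{h_z}'(\tau)+z\,\gamma_{h_z}(\tau)=-\gamma_{N_z}(\tau)$. Solving with the integrating factor $e^{z\tau}$ and the boundary condition $\gamma_{h_z}(\tau)\to 0$ at infinity (which singles out the Borel transform among the solutions and eliminates the homogeneous term $e^{-z\tau}$, unbounded in the unbounded component of $\mathbb C\setminus Q_n$) yields $\gamma_{h_z}(\tau)=-\int_{\ell}e^{z(\sigma-\tau)}\gamma_{N_z}(\sigma)\,d\sigma$ over a path $\ell$ from $\tau$ to infinity inside $\mathbb C\setminus Q_n$ along which $\operatorname{Re}\big(z(\sigma-\tau)\big)\to-\infty$. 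Since $\gamma_{N_z}(\sigma)=g_0(z)\gamma_{M(f)}(\sigma)-f(z)\gamma_{M(g_0)}(\sigma)$, where $\gamma_{M(f)},\gamma_{M(g_0)}$ are holomorphic off $Q_n$ and bounded on $\ell$ uniformly for $f\in B_n$ (here $|M(f)(t)|\le|t|\exp(H_n(t))$ and Remark \ref{supportfunction} are used), while $|g_0(z)|\le C\exp(H_m(z))\le C\exp(H_n(z))$ and $|f(z)|\le\exp(H_n(z))$, one gets $|\gamma_{N_z}(\sigma)|\le C\exp(H_n(z))$ uniformly along $\ell$; combined with $|e^{z(\sigma-\tau)}|\le 1$ and the exponential decay along $\ell$, contributing a factor $\int_0^\infty e^{-|z|\rho}\,d\rho=|z|^{-1}$, this gives $|\gamma_{h_z}(\tau)|\le C|z|^{-1}\exp(H_n(z))$ for $\tau\in C$, uniformly for $f\in B_n$. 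Therefore $|A_\varphi(f)(z)|\le C|z|^{-1}\exp(H_n(z))$, which is (b).

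I expect the main obstacle to be precisely this uniform decay estimate: justifying that the conjugate diagrams of $h_z$ and $N_z$ stay inside the fixed compact $Q_n$ for all $z$, and routing the path $\ell$ so that it simultaneously stays in the domain of holomorphy of $\gamma_{N_z}$ (outside $Q_n$) and in the half-plane $\{\operatorname{Re}(z(\sigma-\tau))\le 0\}$ where $|e^{z(\sigma-\tau)}|\le 1$. These are the points at which the cancellation that defeats the crude bound must be converted into the genuine $|z|^{-1}$ gain. The remaining steps — normality, and the passage from local uniform convergence plus tail decay to $\|\cdot\|_n$-convergence — are routine.
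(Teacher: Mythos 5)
Your first paragraph (Montel plus uniform decay of $\sup_{f\in B_n}|A_\varphi(f)(z)|e^{-H_n(z)}$ implies relative compactness of $A_\varphi(B_n)$ in $E_n$) is exactly the paper's skeleton. But your route to the decay estimate is genuinely different from the paper's, which never touches Borel transforms: the paper bounds $\bigl\|t\widetilde T_{z,g_0}(f)(t)\bigr\|_{n+2}$ directly and uses $|A_\varphi(f)(z)|\le\|\varphi\|_{n+2}^*\bigl\|t\widetilde T_{z,g_0}(f)\bigr\|_{n+2}$, splitting the $t$-plane into $|t-z|\ge 1/\varepsilon$ (where the factor $1/|t-z|\le\varepsilon$ makes the crude triangle-inequality bound small, the weight $e^{H_{n+2}(t)}$ absorbing the extra factor $|t|$) and $|t-z|\le 1/\varepsilon$ (where the maximum modulus principle pushes $t$ to the circle $|t-z|=1/\varepsilon$ and Remark \ref{supportfunction}(ii) supplies a term $e^{\beta(z)}$ with $\beta(z)\to-\infty$). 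No contour geometry at all is needed there.

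In your route there is a genuine gap, located precisely at the point you flagged. The gain $\int_0^\infty e^{-|z|\rho}\,d\rho=|z|^{-1}$ is available only when $\ell$ is the straight ray from $\tau$ in the steepest-descent direction $-\overline z/|z|$, and that ray misses $Q_n$ only for $\tau\notin Q_n+\mathbb R_{\ge 0}\,\overline z/|z|$. Since $C$ must enclose the conjugate diagrams of $h_z$ for all $f\in B_n$, i.e.\ essentially all of $Q_n$, for every $z$ a whole arc of $C$ lies in this ``shadow'' $Q_n+\mathbb R_{\ge 0}\,\overline z/|z|$; for such $\tau$ an admissible path satisfying only your stated conditions (outside $Q_n$, inside the half-plane $\mathrm{Re}(z(\sigma-\tau))\le 0$) must contain a stretch of length bounded below that runs transversally to the descent direction, along which $|e^{z(\sigma-\tau)}|$ is of order $1$; the modulus estimate of that stretch gives $O(1)\cdot\sup_\ell|\gamma_{N_z}|=O\bigl(e^{H_n(z)}\bigr)$ with no factor $|z|^{-1}$. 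As written, your argument therefore proves only that $A_\varphi(B_n)$ is bounded in $E_n$, which is far from relative compactness. The gap is repairable inside your scheme, but it needs an extra idea you did not supply: either tilt the ray — since $C$ stays at a fixed positive distance from $Q_n$, every $\tau\in C$ admits a straight ray to infinity missing $Q_n$ whose descent rate is at least $c|z|$ with $c>0$ depending only on that distance and $\mathrm{diam}(Q_n)$, giving the gain $1/(c|z|)$ — or integrate by parts in $\sigma$ (writing $e^{z(\sigma-\tau)}=\frac1z\frac{d}{d\sigma}e^{z(\sigma-\tau)}$ and using that $N_z(0)=0$ makes $\gamma_{N_z}=O(\sigma^{-2})$, so $\gamma_{N_z}'$ is integrable along $\ell$ uniformly in $f\in B_n$). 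Either repair works, but without one of them the key estimate fails on the shadow arc, whereas the paper's elementary two-case estimate closes the proof with no such geometric difficulty.
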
 

\begin{proof}
	The proof is similar to one of V.A. Tkachenko \cite[Theorem 2]{TKACH}.  
	Since the restriction of $\varphi$ on each space $E_k$ is continuous on
	$E_k$, then for all $n\in\mathbb N$, $h\in E_{n+2}$ we have
	$$
	|\varphi(h)|\le\|\varphi\|_{n+2}^*\|h\|_{n+2}.
	$$
	Fix $n\ge m$ and $\varepsilon>0$, $z\in\mathbb C$, $f\in B_n$.
	For $t\in\mathbb C$, such that $|t-z|\ge 1/\varepsilon$, we obtain
	$$
	\frac{|t||f(t)g_0(z)-f(z)g_0(t)|}{|t-z|\exp(H_{n+2}(t))}\le
	\varepsilon\left(\frac{|t||f(t)||g_0(z)|}{\exp(H_{n+2}(t))}
	+\frac{|t||f(z)||g_0(t)|}{\exp(H_{n+2}(t))}\right)\le
	$$
	$$
	\varepsilon(C_1|g_0(z)|+C_2|f(z)|)\le
	\varepsilon\left(C_1\|g_0\|_n\exp(H_{n}(z)) +C_2\exp(H_n(z))\right)=
	$$
	\begin{equation} \label{first}
		\varepsilon(C_1\|g_0\|_n+C_2)\exp(H_n(z)),
	\end{equation}
	where 
	$$
	C_1=\sup\limits_{h\in B_n}\sup\limits_{t\in\mathbb C}\frac{|t||h(t)|}{\exp(H_{n+2}(t))}<+\infty,\,\,
	C_2=\sup\limits_{t\in\mathbb C}\frac{|t||g_0(t)|}{\exp(H_{n+2}(t))}<+\infty. 
	$$
	
	Let now $|t-z|\le 1/\varepsilon$. Applying the maximum modulus principle to the
	holomorphic function $\frac{f(t)g_0(z)-f(z)g_0(t)}{t-z}$, we conclude, that there exists
	$t_0\in\mathbb C$, such that $|t_0-z|=1/\varepsilon$ and
	$$
	\frac{|t||f(t)g_0(z)-f(z)g_0(t)|}{|t-z|\exp(H_{n+2}(t))}\le
	\varepsilon C_3
	\frac{|f(t_0)||g_0(z)|+|f(z)||g_0(t_0)|}{\exp(H_{n+1}(t)))}\le
	$$
	$$
	2\varepsilon C_3\|g_0\|_n\exp(H_{n}(t_0) + H_n(z) -H_{n+1}(t))\le
	$$
	\begin{equation} \label{two}
		2\varepsilon C_3\|g_0\|_n\exp(H_n(z) + \beta(z)),
	\end{equation}
	where 
	$C_3=\sup\limits_{t\in\mathbb C}\left(\exp\left({\rm log}(1+|t|)+H_{n+1}(t)-H_{n+2}(t)\right)\right)<+\infty$
	and
	$$
	\beta(z)=\sup\limits_{|\eta-z|\le 1/\varepsilon}\left(\left(\sup\limits_{|\xi-\eta|\le 2/\varepsilon}H_n(\xi)\right)-
	H_{n+1}(\eta)\right). 
	$$
	From inequalities (\ref{first}), (\ref{two}) and Remark \ref{supportfunction} it follows, that
	$$
	\lim\limits_{|z|\to\infty}\sup\limits_{f\in B_n}\frac{|A_\varphi(f)(z)|}{\exp(H_n(z))}=0.
	$$
	Hence the set $A_\varphi(B_n)$ is relatively compact in $E_n$.	
\end{proof}

\begin{lem} \label{isomorphism} Let $\varphi\in E'$.
	If the operator $B_\varphi:E\to E$ is injective and $\varphi(g_0)\ne 0$,
	then it is a topological isomorphism  $E$ onto $E$.
\end{lem}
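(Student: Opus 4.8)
The plan is to exploit the decomposition $B_\varphi(f)=\varphi(g_0)f+A_\varphi(f)$ together with the compactness of $A_\varphi$ furnished by Lemma \ref{compact}, reducing the assertion to the Fredholm alternative on each Banach step $E_n$ and then transferring the conclusion to the inductive limit.

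First I would fix $m$ with $g_0\in E_m$ and work on a single space $E_n$ with $n\ge m$. Since $A_\varphi$ maps $E_n$ into $E_n$ and is compact there (Lemma \ref{compact}), while $f\mapsto\varphi(g_0)f$ also maps $E_n$ into $E_n$, the operator $B_\varphi$ restricts to a continuous operator $B_\varphi^{(n)}:=B_\varphi|_{E_n}:E_n\to E_n$ of the form ``nonzero scalar multiple of the identity plus a compact operator''. Because $\varphi(g_0)\ne 0$, the Riesz--Schauder theory applies: $B_\varphi^{(n)}$ is Fredholm of index zero, so its injectivity is equivalent to its being a topological isomorphism of $E_n$ onto $E_n$. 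Now injectivity of $B_\varphi$ on $E$ forces each $B_\varphi^{(n)}$ to be injective, since a nonzero element of $\operatorname{Ker}B_\varphi^{(n)}\subset E_n\subset E$ would be a nonzero element of $\operatorname{Ker}B_\varphi$. Hence for every $n\ge m$ the restriction $B_\varphi^{(n)}:E_n\to E_n$ is a topological isomorphism with bounded inverse $(B_\varphi^{(n)})^{-1}$.

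Next I would glue these Banach-space inverses. For $n\ge m$ and $f\in E_n\subset E_{n+1}$, applying $B_\varphi^{(n+1)}$ to both $(B_\varphi^{(n)})^{-1}(f)$ and $(B_\varphi^{(n+1)})^{-1}(f)$ returns $f$, so injectivity of $B_\varphi^{(n+1)}$ gives $(B_\varphi^{(n+1)})^{-1}(f)=(B_\varphi^{(n)})^{-1}(f)$. Thus the family $\bigl((B_\varphi^{(n)})^{-1}\bigr)_{n\ge m}$ is compatible and defines a single linear map $R:E\to E$ on $E=\bigcup_{n\ge m}E_n$ satisfying $R\circ B_\varphi=B_\varphi\circ R=\mathrm{id}_E$; in particular $B_\varphi$ is a bijection of $E$ (surjectivity is also immediate: any $g\in E$ lies in some $E_n$ with $n\ge m$, and $(B_\varphi^{(n)})^{-1}(g)$ is a preimage).

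Finally, continuity of $R=B_\varphi^{-1}$ follows from the universal property of the inductive limit $E$: a linear map on $E$ is continuous as soon as its restriction to each $E_n$ is continuous into $E$. For $n\ge m$ one has $R|_{E_n}=(B_\varphi^{(n)})^{-1}:E_n\to E_n\hookrightarrow E$, which is continuous; for $n<m$ one factors $R|_{E_n}$ through the continuous embedding $E_n\hookrightarrow E_m$. Hence $B_\varphi^{-1}$ is continuous and $B_\varphi$ is a topological isomorphism of $E$ onto $E$. I expect the main delicacy to lie not in the Fredholm step, which is routine once Lemma \ref{compact} is in hand, but in the bookkeeping at the inductive-limit level: verifying that the step-wise inverses are genuinely compatible across the levels $n$ and that the glued inverse is continuous as a map into $E$ rather than merely into a fixed $E_n$.
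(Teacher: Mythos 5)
Your proof is correct and follows essentially the same route as the paper's: the decomposition $B_\varphi(f)=\varphi(g_0)f+A_\varphi(f)$, compactness of $A_\varphi$ on each $E_n$, $n\ge m$ (Lemma \ref{compact}), and the Fredholm alternative to conclude that each restriction $B_\varphi|_{E_n}$ is a topological isomorphism of $E_n$ onto itself. The only difference is that you spell out the final transfer to the inductive limit (compatibility of the step-wise inverses and continuity of the glued inverse via the universal property), which the paper states without detail.
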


\begin{proof} Let $g_0\in E_m$ for some $m\in\mathbb N$.
By Lemma \ref{compact} the operator $A_\varphi$ is compact in each Banach space 
	$E_n$, $n\ge m$.
	Since the equality $B_\varphi(f)=\varphi(g_0)f + A_\varphi(f)$, $f\in E$, holds
	and $\varphi(g_0)\ne 0$, then
	by the Fredholm alternative the restriction of $B_\varphi$ on each space
	$E_n$, $n\ge m$, is a topological isomorphism $E_n$ on itself. From this it follows that
	$B_\varphi: E\to E$ is a topological isomorphism $E$ onto $E$.
\end{proof}

\medskip
In the next part of this section let $g_0=Pe_\lambda$ for some $\lambda\in Q$ and some polynomial $P$
such that $P(0)=1$.
By $\mathcal D(P)$ we denote the set of all polynomials $q$, dividing $P$ and such that
$q(0)=1$.  

We will use a characterization of proper closed $D_{0,g_0}$-invariant subspaces of $E$, 
obtained in \cite[Corollary 20]{AA} and \cite[Theorem 2]{ITOGI17}.

\begin{lem} \cite{AA}, \cite{ITOGI17} \label{INVARIANT}
	For a subspace $S$ of $E$ following assertions are equivalent:
	\begin{itemize}
		\item[(i)] $S$ is a proper closed $D_{0,g_0}$-invariant subspace of $E$.
		
		\item[(ii)] There exists a polynomial $q\in\mathcal D(P)$ of degree greater or equal to $1$, such that $S=q E$, or
		there exist a polynomial $q\in\mathcal D(P)$ and an integer $n\ge 0$, such that
		$n\ge {\rm deg}(P)-{\rm deg}(q)-1$ and
		$S=qe_\lambda\mathbb C[z]_n$.
		
	\end{itemize}
\end{lem}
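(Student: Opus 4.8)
The plan is to prove the two implications separately. The direction $(ii)\Rightarrow(i)$ is a direct verification, and the substance lies in $(i)\Rightarrow(ii)$, which I would attack by reducing the finite-dimensional case to linear algebra and the infinite-dimensional case, by induction on $\deg P$, to the zero-free situation. For $(ii)\Rightarrow(i)$ I would first record two computations. Writing $P=qr$ with $r(0)=1$ and $g_1:=r\,e_\lambda\in E$, a calculation straight from the definition of $D_{0,g_0}$ gives the intertwining relation $D_{0,g_0}(qh)=q\,D_{0,g_1}(h)$ for every $h\in E$; since multiplication by the polynomial $q$ is an injective continuous operator on $E$ with closed image $qE=\{f\in E: q\mid f\}$, this shows at once that $qE$ is a closed $D_{0,g_0}$-invariant subspace, proper because $\deg q\ge 1$. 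For the finite-dimensional candidate $S=q\,e_\lambda\mathbb C[z]_n$ (closed, being finite-dimensional) I would test invariance on the generators $q\,e_\lambda z^{i}$: for $i\ge 1$ the function vanishes at $0$ and $D_{0,g_0}(q\,e_\lambda z^{i})=q\,e_\lambda z^{i-1}$, while $D_{0,g_0}(q\,e_\lambda)=q\,e_\lambda\cdot\frac{1-r}{z}$ with $\frac{1-r}{z}\in\mathbb C[z]_{\deg P-\deg q-1}$. Hence $S$ is invariant exactly when $\deg P-\deg q-1\le n$, which is the stated constraint.

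For $(i)\Rightarrow(ii)$ I would split $S$ according to its dimension. If $S$ is finite-dimensional, then $D_{0,g_0}|_S$ is a finite-rank operator whose generalized eigenvectors are all of the form (polynomial)$\cdot e_\lambda$: solving $(D_{0,g_0}-\mu)^k f=0$ in $E$ yields $\mu=0$ (here $\operatorname{Ker}D_{0,g_0}^k=P\,e_\lambda\mathbb C[z]_{k-1}$) and $\mu=1/\zeta$ for each zero $\zeta$ of $P$, with generalized eigenvectors $\frac{P}{(z-\zeta)^{j}}e_\lambda$. Consequently $S=e_\lambda W$ for a finite-dimensional $W\subseteq\mathbb C[z]$, and the identity $D_{0,g_0}(e_\lambda w)=e_\lambda\cdot\frac{w-P\,w(0)}{z}$ shows that $W$ is invariant under the purely algebraic operator $w\mapsto (w-P\,w(0))/z$ on $\mathbb C[z]$. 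Classifying its finite-dimensional invariant subspaces (through the same generalized eigenspaces, now a finite linear-algebra problem) gives precisely $W=q\,\mathbb C[z]_n$ with $q\mid P$, $q(0)=1$ and $n\ge\deg P-\deg q-1$; the inequality reappears exactly as the invariance condition at the bottom generator, as in the previous paragraph.

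If $S$ is infinite-dimensional I would argue by induction on $\deg P$, the base case $P\equiv 1$ being the assertion that the zero-free operator $D_{0,e_\lambda}$ is unicellular. Among the divisors $q\mid P$ with $q(0)=1$, the set of those with $S\subseteq qE$ is closed under least common multiple, since $q'E\cap q''E={\rm lcm}(q',q'')E$ (compare orders of vanishing at the zeros of $P$); being a finite nonempty set, it has a largest element $q$. Then $q^{-1}S:=\{h\in E:qh\in S\}$ is a closed, infinite-dimensional subspace, and the intertwining relation makes it $D_{0,g_1}$-invariant with $g_1=(P/q)e_\lambda$. By the induction hypothesis a proper, closed, infinite-dimensional $D_{0,g_1}$-invariant subspace must have the form $q''E$ with $q''\mid(P/q)$, $\deg q''\ge1$; but $q^{-1}S\subseteq q''E$ would give $S\subseteq qq''E$ with $qq''\mid P$, contradicting the maximality of $q$. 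Hence $q^{-1}S$ cannot be proper and equals $E$, so $S=qE$, the first alternative.

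The main obstacle is the base case of this induction, the unicellularity of $D_{0,e_\lambda}$: one must show that a zero-free generalized backward shift has no proper closed infinite-dimensional invariant subspace, so that its only proper closed invariant subspaces are the finite-dimensional $e_\lambda\mathbb C[z]_n$. This is the genuinely analytic step and cannot be reduced to linear algebra; I expect it to rest on the weighted growth estimates of Remark \ref{supportfunction} and on the compactness mechanism already exploited in Lemma \ref{compact}, with the convex geometry of $Q$ entering through the support functions $H_n$.
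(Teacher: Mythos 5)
The paper itself offers no proof of this lemma: it is imported wholesale from \cite[Corollary 20]{AA} and \cite[Theorem 2]{ITOGI17}, where the classification is obtained by duality (passing to annihilators in $H(Q)$ via the Fourier--Laplace transform and using division/interpolation structure there), not by anything resembling your induction. So your argument has to stand on its own. Parts of it do: the direction (ii)$\Rightarrow$(i) is correct and complete --- the intertwining identity $D_{0,g_0}(qh)=q\,D_{0,g_1}(h)$ with $g_1=(P/q)e_\lambda$, $q(0)=1$, is right, and the computation at the bottom generator $qe_\lambda$ yields exactly the constraint $n\ge{\rm deg}(P)-{\rm deg}(q)-1$. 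The reduction of the finite-dimensional half of (i)$\Rightarrow$(ii) to invariant subspaces of $w\mapsto (w-Pw(0))/z$ on $\mathbb C[z]$ is also sound, although the final classification is only asserted: one must still work out the Jordan structure on each generalized eigenspace ${\rm span}\{P(z-\zeta)^{-j}e_\lambda\}$ and verify that the admissible direct sums across eigenvalues are precisely the spaces $qe_\lambda\mathbb C[z]_n$ with $n\ge{\rm deg}(P)-{\rm deg}(q)-1$; this is genuinely finite linear algebra, so I regard it as a sketch with a correct completion rather than a gap.

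The infinite-dimensional half, however, has two genuine gaps, and they sit exactly where the analytic content of the lemma lives. The first you flag yourself: the base case, unicellularity of $D_{0,e_\lambda}$, is not proved but only conjectured to follow from the compactness mechanism of Lemma \ref{compact}. That mechanism yields Fredholm-alternative conclusions about the operators $B_\varphi$; it says nothing about an arbitrary closed invariant subspace, and the known proofs of unicellularity proceed by the duality route indicated above, so even the proposed source of the missing argument is doubtful. The second gap you do not flag, and it makes the induction step circular: nothing in your argument prevents the maximal divisor $q$ of $P$ with $S\subseteq qE$ from being $q\equiv 1$. In that case $g_1=(P/q)e_\lambda=g_0$, and ``the induction hypothesis'' you invoke for $D_{0,g_1}$ is the very statement being proved, for the same polynomial $P$. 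What the step actually requires is the assertion that every infinite-dimensional proper closed $D_{0,g_0}$-invariant subspace is contained in some $qE$ with ${\rm deg}(q)\ge 1$ --- i.e., that such a subspace must ``see'' a zero of $P$ --- and that assertion \emph{is} the infinite-dimensional half of the lemma. Your induction therefore only redistributes the problem among divisors already known to contain $S$; the case where no zero of $P$ is visible (which subsumes the base case) is untouched, and it is precisely the hard part.
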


\begin{lem} \label{KERNEL}
	Let $\varphi\in E'$ and the operator $B_\varphi: E\to E$
	be not injective. Then $B_\varphi(g_0)=0$.
\end{lem}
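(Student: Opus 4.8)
The plan is to reduce the assertion to a scalar condition and then read it off from the lattice of invariant subspaces. First I would record the identity $T_{z,g_0}(g_0)=g_0(z)\,g_0$, obtained by cancelling the factor $t-z$ in the definition of $T_{z,g_0}$; applying $\varphi$ in the $t$-variable yields $B_\varphi(g_0)=\varphi(g_0)\,g_0$. (Equivalently, $g_0\in{\rm Ker}\,D_{0,g_0}=\mathbb C g_0$ and $B_\varphi$ commutes with $D_{0,g_0}$, so $B_\varphi(g_0)$ is a scalar multiple of $g_0$, the scalar being $\varphi(g_0)$.) Since $g_0\ne 0$, the conclusion $B_\varphi(g_0)=0$ is equivalent to $\varphi(g_0)=0$, and it suffices to show that $g_0\in{\rm Ker}\,B_\varphi$.

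Next I would use that $N:={\rm Ker}\,B_\varphi$ is a closed $D_{0,g_0}$-invariant subspace: it is closed because $B_\varphi$ is continuous, and invariant because $B_\varphi D_{0,g_0}=D_{0,g_0}B_\varphi$ gives $D_{0,g_0}(N)\subseteq N$. The case $B_\varphi=0$ is trivial, so assume $B_\varphi\ne 0$; then $N$ is a proper nonzero closed $D_{0,g_0}$-invariant subspace, and Lemma \ref{INVARIANT} applies. Thus either $N=qE$ with $q\in\mathcal D(P)$, $\deg q\ge 1$, or $N=qe_\lambda\mathbb C[z]_n$ with $q\in\mathcal D(P)$ and $n\ge\deg P-\deg q-1$.

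In the first case $q$ divides $P$, so $P/q$ is a polynomial and $g_0=q\cdot\bigl((P/q)e_\lambda\bigr)\in qE=N$, hence $\varphi(g_0)=0$. The same conclusion is immediate in the second case as soon as $n\ge\deg P-\deg q$, because then $g_0=qe_\lambda\cdot(P/q)$ with $\deg(P/q)\le n$, so again $g_0\in N$.

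The remaining situation, which I expect to be the main obstacle, is the finite-dimensional kernel $N=qe_\lambda\mathbb C[z]_n$ with the extremal value $n=\deg P-\deg q-1$: here $\deg(P/q)=n+1$, so $g_0\notin N$ and $D_{0,g_0}$ acts invertibly on $N$ (so $N\cap{\rm Ker}\,D_{0,g_0}=\{0\}$), which defeats the inclusion argument above. To treat it I would bring in Lemma \ref{compact}: if $\varphi(g_0)\ne 0$ then $B_\varphi=\varphi(g_0)\,{\rm Id}+A_\varphi$ is Fredholm of index zero on each $E_n$ with $n\ge m$, so ${\rm Im}\,B_\varphi$ is closed, $D_{0,g_0}$-invariant and of finite codimension, hence equal to $q'E$ by Lemma \ref{INVARIANT}, with $\deg q'=\dim N$ forced by the index-zero balance. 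I would then compare $N={\rm Ker}\,B_\varphi$ with ${\rm Ker}\,S_\varphi=({\rm Im}\,B_\varphi)^{\perp}$ via the duality $S_\varphi=B_\varphi^{*}$ and try to contradict $\varphi(g_0)\ne 0$. Reconciling the invertible action of $D_{0,g_0}$ on $N$ with the surjectivity demanded by the Fredholm alternative is the delicate point on which the whole proof hinges, and the step I would spend the most effort on.
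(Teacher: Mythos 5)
Your reduction of the lemma to $\varphi(g_0)=0$ via $T_{z,g_0}(g_0)=g_0(z)g_0$, and your treatment of the cases ${\rm Ker}\,B_\varphi=qE$ and ${\rm Ker}\,B_\varphi=qe_\lambda\mathbb C[z]_n$ with $n\ge{\rm deg}(P)-{\rm deg}(q)$, are correct and coincide with the paper's own proof. The extremal case $n={\rm deg}(P)-{\rm deg}(q)-1$, ${\rm deg}(q)<{\rm deg}(P)$, is exactly where your argument stops: what you offer there is a programme (Fredholm index zero on each step, identification of the image, duality with $S_\varphi$), none of whose steps is carried out, so as a proof the proposal has a genuine gap. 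The paper closes this case quite differently: the function $g_1:=\frac{P}{z-\mu}e_\lambda=qP_1e_\lambda$ (where $\mu$ is a zero of $P$ with $q\cdot(z-\mu)$ dividing $P$, so ${\rm deg}\,P_1=n$) lies in the kernel, whence $B_\varphi(g_1)=0$ and $\varphi(g_1)=0$; the paper then multiplies the resulting identity (\ref{interpolationequation}) by $z-\mu$ and uses the Leont'ev-type formula (\ref{SOOTNOSHENIE2}) to conclude $B_\varphi(g_0)=0$.

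You should know, however, that your instinct about where everything hinges is sharper than you realize: the gap cannot be filled, because the lemma is false in precisely that case, and the paper's own argument there is erroneous. For a zero $\mu$ of $P$ put $g_1:=\frac{g_0}{z-\mu}\in E$. Writing $g_0(t)=(t-\mu)g_1(t)$ and cancelling $t-z$,
\[
T_{z,g_0}(g_1)(t)=\frac{t\,g_1(t)(z-\mu)g_1(z)-z\,g_1(z)(t-\mu)g_1(t)}{t-z}=-\mu\,g_1(z)\,g_1(t),
\]
so $B_\varphi(g_1)=-\mu\,\varphi(g_1)\,g_1$ for \emph{every} $\varphi\in E'$. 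Since $g_0$ and $g_1$ are linearly independent, Hahn--Banach gives $\varphi\in E'$ with $\varphi(g_1)=0$ and $\varphi(g_0)=1$; then $g_1\in{\rm Ker}\,B_\varphi$, so $B_\varphi$ is not injective, while $B_\varphi(g_0)=\varphi(g_0)g_0=g_0\ne 0$. (Concretely: $P(z)=1-z/\mu$, $\lambda=0$, $\varphi=\delta_{0,1}$ yield $B_\varphi=D_{0,g_0}+g_0'(0)\,{\rm Id}$, whose kernel is $\mathbb C\cdot 1=\mathbb C g_1$ --- the extremal configuration $q=1$, $n=0$ --- and whose image is $PE$, of codimension one; so your Fredholm balance is satisfied and produces no contradiction.) The paper's error is in the interpolation step: the functions related by the factor $z-\mu$ in (\ref{interpolationequation}) are $M(g_1)$ and $M(g_0)=(t-\mu)M(g_1)$, so the constant generated by (\ref{SOOTNOSHENIE2}) is $\varphi(M(g_1))$, not $\varphi(g_1)$ (moreover (\ref{SOOTNOSHENIE2}) as printed, with $\gamma_f$ in place of $\gamma_{f_1}$, is incompatible with Remark \ref{INTERPOLATING}(ii),(iv): test it on $f=t$, $f_1=1$, $\lambda=0$). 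Since $M(g_1)=g_0+\mu g_1$, the hypothesis $\varphi(g_1)=0$ gives $\varphi(M(g_1))=\varphi(g_0)$, and the whole manipulation returns only the tautology $B_\varphi(g_0)=\varphi(g_0)g_0$, never $B_\varphi(g_0)=0$. Consequently Lemma \ref{injectiv} and Theorem \ref{ISOMORPHISM} also fail as stated once $P$ has zeros: by Lemma \ref{INVARIANT} every nonzero closed $D_{0,g_0}$-invariant subspace contains $g_0$ or one of the eigenvectors $\frac{g_0}{z-\mu}$ of $D_{0,g_0}$, so injectivity of $B_\varphi$ (and then, via Lemma \ref{isomorphism}, its being an isomorphism) is equivalent to $\varphi(g_0)\ne 0$ \emph{together with} $\varphi\bigl(\frac{g_0}{z-\mu}\bigr)\ne 0$ for every zero $\mu$ of $P$.
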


\begin{proof}	
	For $\varphi=0$ this statement is obviously.
	Let $\varphi\ne 0$ and $S:={\rm Ker}\, B_\varphi$. Then $S$ is a proper closed $D_{0,g_0}$-invariant subspace of $E$.
	We will apply Lemma \ref{INVARIANT}.
	
	If there exists a polynomial $q\in\mathcal D(P)$ of degree greater or equal to $1$ such that $S=qE$,
	then $g_0\in S$.	
	We assume now that there are a polynomial $q\in\mathcal D(P)$, an integer $n\ge 0$ with $n\ge {\rm deg}(P)-{\deg(q)}-1$, 
	for which $S=qe_\lambda\mathbb C[z]_n$. If ${\rm deg}(q)={\rm deg}(P)$, then $q=P$ and $g_0=Pe_\lambda\in S$.
	Consider the case ${\rm deg}(q)<{\rm deg}(P)$. In this case there exists $\lambda\in\mathbb C$, such
	that $P(\lambda)=0$ and 
	$q(z)(z-\lambda)$ divides $P(z)$.
	Note that the degree of the polynomial $P_1(z)=\frac{P(z)}{q(z)(z-\lambda)}$ is equal to
	${\rm deg}(P)-{\deg(q)}-1$. Hence the function $g_1(z)=q(z)e^{\lambda z}P_1(z)=\frac{P(z)}{z-\lambda}e_\lambda$
	belongs to $S$.
	From $B_\varphi(g_1)=0$, by (\ref{SOOTNOSHENIE}), it follows that
	\begin{equation} \label{interpolationequation}
	g_0(z)\omega_{M(g_1)}(z,\widehat\varphi)- M(g_1)\omega_{g_0}(z,\widehat\varphi)=0.
	\end{equation}
	for all $z\in\mathbb C$ and $0=B_\varphi(g_1)(0)=\varphi(g_1)$.
	By Remark \ref{INTERPOLATING}, this implies that $\frac{1}{2\pi i}\int\limits_C \gamma_{g_1}(t)\widehat\varphi(t)dt=0$
	(a closed convex curve $C$ in $Q$ surrounds 
	the conjugate diagram of $g_1$).
	Multiplying (\ref{interpolationequation}) by $z-\lambda$ and using the equality (\ref{SOOTNOSHENIE2}), we infer
	$$
	g_0(z)\omega_{M(g)}(z,\widehat\varphi)- M(g)\omega_{g_0}(z,\widehat\varphi)=0
	$$
	for all $z\in\mathbb C$. Consequently, by (\ref{SOOTNOSHENIE}),
	$B_\varphi(g_0)=0$.
\end{proof}

\begin{lem} \label{injectiv}
		The following assertions are equivalent:
	\begin{itemize}
		\item[(i)] The operator $B_\varphi: E\to E$ is injective.
		\item[(ii)] $\varphi(g_0)\ne 0$.
	\end{itemize}
\end{lem}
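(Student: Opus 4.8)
The plan is to reduce both implications to a single elementary identity: that $g_0$ is an eigenvector of $B_\varphi$ with eigenvalue $\varphi(g_0)$, namely
\[
B_\varphi(g_0)=\varphi(g_0)\,g_0 .
\]
To establish this I would start from the decomposition $B_\varphi(f)=\varphi(g_0)f+A_\varphi(f)$ stated earlier, together with the formula $A_\varphi(f)(z)=\varphi_t\bigl(t\,\widetilde T_{z,g_0}(f)(t)\bigr)$. Substituting $f=g_0$ into the definition of $\widetilde T_{z,g_0}$ gives, for $t\ne z$, the numerator $g_0(t)g_0(z)-g_0(z)g_0(t)=0$, and the $t=z$ branch reduces to $g_0(z)g_0'(z)-g_0(z)g_0'(z)=0$; hence $\widetilde T_{z,g_0}(g_0)\equiv 0$ for every $z\in\mathbb C$, so that $A_\varphi(g_0)=0$ and $B_\varphi(g_0)=\varphi(g_0)g_0$. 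Throughout I would also record that $g_0\ne 0$, since $g_0(0)=1$.

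For the implication (i)$\Rightarrow$(ii) I would argue by contraposition. If $\varphi(g_0)=0$, then the identity above gives $B_\varphi(g_0)=0$ while $g_0\ne 0$; thus $g_0$ is a nonzero element of ${\rm Ker}\,B_\varphi$, and $B_\varphi$ fails to be injective. Equivalently, injectivity of $B_\varphi$ forces $\varphi(g_0)\ne 0$.

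For the reverse implication (ii)$\Rightarrow$(i) I would again contrapose, this time invoking Lemma \ref{KERNEL}. Suppose $B_\varphi$ is not injective. Then Lemma \ref{KERNEL} yields $B_\varphi(g_0)=0$, and comparison with $B_\varphi(g_0)=\varphi(g_0)g_0$ together with $g_0\ne 0$ forces $\varphi(g_0)=0$. Hence $\varphi(g_0)\ne 0$ implies $B_\varphi$ injective, which closes the equivalence.

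As for where the difficulty lies: within this statement essentially no work remains, since the only genuinely new ingredient is the one-line eigenvector computation $B_\varphi(g_0)=\varphi(g_0)g_0$. The real depth of the argument has already been spent in Lemma \ref{KERNEL}, whose proof rests on the classification of proper closed $D_{0,g_0}$-invariant subspaces (Lemma \ref{INVARIANT}) and on the interpolation identities (\ref{SOOTNOSHENIE})--(\ref{SOOTNOSHENIE2}); the substantive obstacle one must have already cleared is that a nontrivial kernel of $B_\varphi$ necessarily \emph{contains} $g_0$, rather than merely some other proper invariant subspace. Granting that, the present equivalence is immediate.
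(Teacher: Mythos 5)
Your proof is correct and follows essentially the same route as the paper: both directions rest on the identity $B_\varphi(g_0)=\varphi(g_0)g_0$ (which the paper uses without comment and you verify via $\widetilde T_{z,g_0}(g_0)\equiv 0$), and the implication (ii)$\Rightarrow$(i) is obtained, exactly as in the paper, by contraposition from Lemma \ref{KERNEL}. You also correctly locate the real content in Lemma \ref{KERNEL}, so nothing further is needed.
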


\begin{proof}
	(i)$\Rightarrow$(ii): From $B_\varphi(g_0)=\varphi(g_0)g_0$ it follows that
	$\varphi(g_0)\ne 0$.
	
	(ii)$\Rightarrow$(i): Suppose that $B_\varphi$ is not injective.
	By Lemma \ref{KERNEL} $B_\varphi(g_0)=0$ and, consequently,
	$0=B_\varphi(g_0)(0)=\varphi(g_0)$. A contradiction. 
\end{proof}

For $\lambda\in\mathbb C$ and an integer $k\ge 0$, we introduce the functional $\delta_{\lambda, k}(f):=
f^{(k)}(\lambda)$, $f\in E$. All these functionals are continuous and linear on $E$.

\begin{lem} \label{delty} Let ${\rm deg}(P)\ge 1$, $k(\lambda)$ be the multiplicity of
	a zero $\lambda$ of $P$. 
	\begin{itemize}
		\item[(i)] $\delta_{\lambda,k}\otimes\delta_{\mu,l}=0$ for all zeros $\lambda, \mu$
		of $P$ and for all 
		integers $k, l$ with $0\le k\le k(\lambda)-1$, $0\le l\le k(\mu)-1$.
		\item[(ii)] $B_{\delta_{\lambda,k}} B_{\delta_{\mu,l}}=0$
		for all zeros $\lambda, \mu$
		of $P$ and for all 
		integers $k, l$ with $0\le k\le k(\lambda)-1$, $0\le l\le k(\mu)-1$. 
	\end{itemize}
\end{lem}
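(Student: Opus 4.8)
The plan is to derive (ii) from (i) and to prove (i) by locating the zeros of $g_0$ through the range of $B_{\delta_{\mu,l}}$. For (ii), recall from Theorem~\ref{ALGEBRA} that $\varphi\mapsto B_\varphi$ is an isomorphism of the algebra $(E',\otimes)$ onto $\mathcal K(D_{0,g_0})$, so that $B_{\delta_{\lambda,k}}B_{\delta_{\mu,l}}=B_{\delta_{\lambda,k}\otimes\delta_{\mu,l}}$. Thus once (i) gives $\delta_{\lambda,k}\otimes\delta_{\mu,l}=0$ we obtain $B_{\delta_{\lambda,k}}B_{\delta_{\mu,l}}=B_0=0$, and it remains to prove (i).

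For (i), I would first turn the $\otimes$-product into a differentiation. Directly from the definitions of $\otimes$ and of $B_\psi$ one has $(\varphi\otimes\psi)(f)=\varphi\bigl(B_\psi(f)\bigr)$ for all $f\in E$, whence
$$(\delta_{\lambda,k}\otimes\delta_{\mu,l})(f)=\delta_{\lambda,k}\bigl(B_{\delta_{\mu,l}}(f)\bigr)=\bigl(B_{\delta_{\mu,l}}(f)\bigr)^{(k)}(\lambda),\qquad f\in E.$$
Hence it suffices to show that for every $f\in E$ the entire function $B_{\delta_{\mu,l}}(f)$ vanishes at $\lambda$ to order at least $k(\lambda)$; then all its derivatives up to order $k(\lambda)-1$, in particular the $k$-th one, vanish at $\lambda$.

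The key step is to extract a factor $g_0$ from $B_{\delta_{\mu,l}}(f)$. Writing $B_{\delta_{\mu,l}}(f)(z)=\frac{\partial^l}{\partial t^l}\big|_{t=\mu}T_{z,g_0}(f)(t)$ and recalling that $T_{z,g_0}(f)(t)=\bigl(M(f)(t)g_0(z)-M(f)(z)g_0(t)\bigr)/(t-z)$ with $M(f)(t)=tf(t)$, the Leibniz rule gives
$$B_{\delta_{\mu,l}}(f)(z)=g_0(z)\sum_{j=0}^{l}\binom{l}{j}(M(f))^{(j)}(\mu)\,\frac{\partial^{l-j}}{\partial t^{l-j}}\Big|_{t=\mu}\frac{1}{t-z},$$
since every term carrying a factor $g_0^{(j)}(\mu)$ drops out: $\mu$ is a zero of $P$, hence of $g_0$, of multiplicity $k(\mu)$, so $g_0^{(j)}(\mu)=0$ for $0\le j\le l\le k(\mu)-1$. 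Thus $B_{\delta_{\mu,l}}(f)=g_0\cdot\Phi$ with $\Phi$ holomorphic on $\mathbb C\setminus\{\mu\}$. If $\lambda\ne\mu$, then $\Phi$ is holomorphic at $\lambda$ while $g_0$ vanishes there to order $k(\lambda)$, so $B_{\delta_{\mu,l}}(f)$ vanishes at $\lambda$ to order at least $k(\lambda)$, and (i) follows in this case.

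The main obstacle is the coincident case $\lambda=\mu$. There $\Phi$ has a pole (of order $l+1$) at $\mu$, so extracting $g_0$ yields vanishing only to order $k(\mu)-l-1$ and the simple order count breaks down; the outcome then hinges on the precise interplay of $k$, $l$ and the multiplicity $k(\mu)$. Controlling the remaining orders is where I expect the real work to lie, and I would approach it through the Leont'ev interpolation identities (\ref{SOOTNOSHENIE}) and (\ref{SOOTNOSHENIE2}) — the same tools used in the proof of Lemma~\ref{KERNEL} (passing to an auxiliary function and lowering the interpolating function step by step) — rather than through the divisibility argument that settles the case $\lambda\ne\mu$.
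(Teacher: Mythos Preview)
Your reduction of (ii) to (i) via $B_{\varphi}B_{\psi}=B_{\varphi\otimes\psi}$ is exactly the paper's argument. For (i) the paper gives no self-contained proof, writing only ``verified directly (see \cite{AA}, proof of Lemma~6)''; your explicit factorization $B_{\delta_{\mu,l}}(f)=g_0\cdot\Phi$ is a clean realization of such a direct verification and correctly disposes of the case $\lambda\ne\mu$.

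The obstacle you flag at $\lambda=\mu$, however, cannot be overcome by the Leont'ev identities or by any other device: the assertion is \emph{false} in that case. Take $P(z)=1-z$ with trivial exponential factor, so $g_0(z)=1-z$ and the unique zero is $1$ with $k(1)=1$. Then for $z\ne 1$
\[
B_{\delta_{1,0}}(f)(z)=T_{z,g_0}(f)(1)=\frac{f(1)(1-z)-zf(z)\cdot 0}{1-z}=f(1),
\]
so $B_{\delta_{1,0}}(f)\equiv f(1)$ and $(\delta_{1,0}\otimes\delta_{1,0})(f)=f(1)$; hence $\delta_{1,0}\otimes\delta_{1,0}=\delta_{1,0}\ne 0$. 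Your own order count already foreshadows this: $g_0\cdot\Phi$ vanishes at $\mu$ only to order $k(\mu)-l-1$, and that bound is sharp. Thus your argument is complete precisely on the range where the statement holds, and the ``main obstacle'' you isolated is a defect in the lemma as written rather than a gap in your method. (In the paper's sole use of the lemma, the proof of Theorem~\ref{RASCHEPL}, what is invoked is $B_{\delta(q)}B_{\delta(r)}=0$; this likewise does not follow from the termwise identities once $q$ and $r$ share a zero of $P$ with its full multiplicity.)
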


\begin{proof} The assertion (i) is verified directly (see, for example, \cite[the proof of Lemma 6]{AA}).
	
	The equality in (ii) follows from $B_\varphi B_\psi= B_{\varphi\otimes\psi}$,
	$\varphi, \psi\in E'$ (see Theorem \ref{ALGEBRA}).
\end{proof}

Suppose that ${\rm deg}(P)\ge 1$. For a polynomial $q\in\mathcal D(P)$ of degree greater or equal to $1$
let $\lambda_j$, $1\le j\le m$, be all different zeros of $q$,
$k_j$ be the multiplicity of the zero $\lambda_j$ of $q$. 
We define  the "canonical"\, functional, corresponding to $q$, by
$$
\delta(q):=\sum\limits_{j=1}^m\sum\limits_{k=0}^{k_j-1}\delta_{\lambda_j, k}.
$$

\begin{lem} \label{prodelty}
	Let ${\rm deg}(P)\ge 1$.
	For each polynomial $q\in\mathcal D(P)$ of degree greater or equal to $1$ the equality
	${\rm Ker}\,B_{\delta(q)}= q E$ holds.
\end{lem}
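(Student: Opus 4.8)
The plan is to establish the two inclusions ${\rm Ker}\,B_{\delta(q)}\subseteq qE$ and $qE\subseteq{\rm Ker}\,B_{\delta(q)}$, both resting on one explicit computation. Let $\lambda_1,\dots,\lambda_m$ be the distinct zeros of $q$ with multiplicities $k_1,\dots,k_m$; since $q\in\mathcal D(P)$ we have $q(0)=1$, so every $\lambda_j\neq0$, and since $q$ divides $P$ the function $g_0=Pe_\lambda$ vanishes at each $\lambda_j$ to order $k(\lambda_j)\ge k_j$. The central observation is that, because $\delta(q)$ only involves the derivatives $\partial_t^k$ at $\lambda_j$ for $k\le k_j-1\le k(\lambda_j)-1$, the term $zf(z)g_0(t)$ in $T_{z,g_0}(f)(t)=\frac{tf(t)g_0(z)-zf(z)g_0(t)}{t-z}$ contributes nothing (by Leibniz, all low-order derivatives of $g_0$ at $\lambda_j$ vanish). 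Hence for every $f\in E$ and every $z\notin\{\lambda_j\}$ one gets $B_{\delta(q)}(f)(z)=g_0(z)\,\Phi_f(z)$, where $\Phi_f(z):=\sum_{j=1}^m\sum_{k=0}^{k_j-1}\partial_t^k\big|_{t=\lambda_j}\frac{tf(t)}{t-z}$. Since $g_0\not\equiv0$, the condition $B_{\delta(q)}(f)=0$ is equivalent to $\Phi_f\equiv0$.

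The inclusion $qE\subseteq{\rm Ker}\,B_{\delta(q)}$ is then immediate: if $f=qh$, then $tf(t)$ vanishes at each $\lambda_j$ to order $\ge k_j$, so every summand of $\Phi_f$ vanishes and $B_{\delta(q)}(f)\equiv0$. In particular ${\rm Ker}\,B_{\delta(q)}$ is an infinite-dimensional, closed, $D_{0,g_0}$-invariant subspace of $E$ (closed and invariant because $B_{\delta(q)}\in\mathcal K(D_{0,g_0})$), and it is proper since $\delta(q)\neq0$ forces $B_{\delta(q)}\neq0$ by Theorem \ref{ALGEBRA}. By Lemma \ref{INVARIANT} the proper closed invariant subspaces of type $q'e_\lambda\mathbb C[z]_n$ are finite-dimensional, so ${\rm Ker}\,B_{\delta(q)}$ must be of the form $q'E$ with $q'\in\mathcal D(P)$, $\deg q'\ge1$; and from $qE\subseteq q'E$ we get $q'\mid q$.

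It remains to rule out $q'\neq q$, and this is where I expect the real work. If $q'$ were a proper divisor, some zero $\lambda_{j_0}$ of $q$ would have strictly smaller multiplicity $k'_{j_0}<k_{j_0}$ in $q'$; then $f_0:=q/(z-\lambda_{j_0})$ lies in $q'E$ but vanishes at $\lambda_{j_0}$ only to order $k_{j_0}-1$. I would compute $\Phi_{f_0}$ and read off its principal part at $\lambda_{j_0}$: since $\lambda_{j_0}\neq0$, the function $tf_0(t)$ vanishes there to order exactly $k_{j_0}-1$, and the coefficient of $(\lambda_{j_0}-z)^{-1}$ in $\Phi_{f_0}$ works out to $\sum_{k=0}^{k_{j_0}-1}\partial_t^k(tf_0)\big|_{\lambda_{j_0}}=\partial_t^{k_{j_0}-1}(tf_0)\big|_{\lambda_{j_0}}\neq0$, all other summands and all higher-order pole coefficients vanishing because they involve only lower-order derivatives of $tf_0$ at $\lambda_{j_0}$. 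Thus $\Phi_{f_0}\not\equiv0$, so $B_{\delta(q)}(f_0)\neq0$, contradicting $f_0\in q'E={\rm Ker}\,B_{\delta(q)}$. Hence $q'=q$ and ${\rm Ker}\,B_{\delta(q)}=qE$.

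The main obstacle is the bookkeeping in the last paragraph: one must expand $\partial_t^k\big|_{\lambda_j}\frac{tf(t)}{t-z}$ via Leibniz into pole terms $(\lambda_j-z)^{-p}$ and verify that the relevant binomial and factorial coefficients are nonzero, so that the exact vanishing order of $tf_0$ at $\lambda_{j_0}$ is genuinely detected by the principal part rather than accidentally cancelling. Alternatively, one can bypass Lemma \ref{INVARIANT} and prove ${\rm Ker}\,B_{\delta(q)}\subseteq qE$ directly: the identity $\Phi_f\equiv0$ forces, by a descending induction on the pole order at each $\lambda_j$, all derivatives $\partial_t^k(tf)\big|_{\lambda_j}=0$ for $k\le k_j-1$, equivalently $f^{(k)}(\lambda_j)=0$ (using $\lambda_j\neq0$), which is exactly $f\in qE$; this is the same coefficient computation performed at every $\lambda_j$ simultaneously.
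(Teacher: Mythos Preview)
Your proposal is correct. The paper's proof is essentially your ``alternative'' route: it expands $B_{\delta(q)}(f)(z)$ directly (using the same observation that the $g_0(t)$ term is killed by $\delta(q)$) as $g_0(z)$ times a sum of partial fractions $\sum_{j}\sum_{s}(\lambda_j-z)^{-s}\sum_{l=0}^{k_j-s}\beta_{s,l}f^{(l)}(\lambda_j)$ with an upper-triangular system in the $f^{(l)}(\lambda_j)$ (the top coefficients $\beta_{s,k_j-s}$ are nonzero), and reads off both inclusions from this single formula. Your main argument arrives at the same identity $B_{\delta(q)}(f)=g_0\Phi_f$ but handles the reverse inclusion by invoking Lemma~\ref{INVARIANT} and a single test function $f_0=q/(z-\lambda_{j_0})$; this trades the full triangular back-substitution for one explicit pole computation, at the price of importing the invariant-subspace classification. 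Since you also sketch the direct descending-induction argument and correctly identify the Leibniz/coefficient structure that makes it work, both routes are sound and rest on the same core calculation.
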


\begin{proof}
Employing standard calculations, for $f\in E$ we obtain:
	$$
	B_{\delta(q)}(f)(z)=	\delta(q)_t\left(f(t)g_0(z) + z\frac{f(t)g_0(z) - f(z)g_0(t)}{t-z}\right)=
	$$
	$$
g_0(z)\sum\limits_{j=1}^m\sum\limits_{k=0}^{k_j-1}f^{(k)}(\lambda_j) +
	zg_0(z)\sum\limits_{j=1}^m\sum\limits_{k=0}^{k_j-1}\sum\limits_{s=0}^k C_{k}^s f^{(k-s)}(\lambda_j)
	\frac{(-1)^{s}s!}{(\lambda_j-z)^{s+1}}=
	$$	
\begin{equation} \label{drobi}
	g_0(z)\sum\limits_{j=1}^m\sum\limits_{s=1}^{k_j}\frac{1}{(\lambda_j-z)^s}
	\sum\limits_{l=0}^{k_j-s}\beta_{s,l}f^{(l)}(\lambda_j),
\end{equation}
	where all constants $\beta_{l,s}$ are independent of $f\in E$
	and $\beta_{s,k_j-s}\ne 0$, $1\le s\le k_j$, $0\le l\le k_j-s$.	
	
Let $B_{\delta(q)}(f)=0$ for some $f\in E$. From (\ref{drobi}) it follows that 
$\sum\limits_{l=0}^{k_j-s}\beta_{s,l}f^{(l)}(\lambda_j)=0$, \,
$1\le s\le k_j$, $1\le j\le m$.
Hence $f^{(l)}(\lambda_j)=0$, $0\le l\le k_j-1$, $1\le j\le m$, and,
consequently, $f\in q E$.
Vice versa, if $f\in q E$, then  (\ref{drobi}) implies $B_{\delta(q)}(f)=0$.
\end{proof}

\section{Main results}

In this section we fix a point $\lambda\in Q$ and a polynomial $P$ with $P(0)=1$
and set $g_0:=Pe_\lambda$.

\begin{thm} \label{ISOMORPHISM} For $\varphi\in E'$
	the following assertions are equivalent:
	\begin{itemize}
		\item[(i)] The operator $B_\varphi$ is a topological isomorphism 
		$E$ onto $E$.
		\item[(ii)] $\varphi(g_0)\ne 0$.
	\end{itemize}
\end{thm}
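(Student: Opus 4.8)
The plan is to prove the equivalence by combining the Fredholm-theoretic machinery already established in this section with the injectivity criterion of Lemma \ref{injectiv}. The direction (i)$\Rightarrow$(ii) is immediate: if $B_\varphi$ is a topological isomorphism, it is in particular injective, and $B_\varphi(g_0)=\varphi(g_0)g_0$ shows that $\varphi(g_0)\ne 0$, since otherwise $g_0\ne 0$ would lie in the kernel. The substantive content is the reverse implication (ii)$\Rightarrow$(i).

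For (ii)$\Rightarrow$(i), first I would invoke Lemma \ref{injectiv}: the hypothesis $\varphi(g_0)\ne 0$ is exactly condition (ii) there, so it yields that $B_\varphi:E\to E$ is injective. At this point both hypotheses of Lemma \ref{isomorphism} are satisfied, namely $B_\varphi$ is injective and $\varphi(g_0)\ne 0$, so Lemma \ref{isomorphism} concludes directly that $B_\varphi$ is a topological isomorphism of $E$ onto $E$. Thus the theorem is essentially an assembly of Lemma \ref{injectiv} and Lemma \ref{isomorphism}, with the real analytic work having been done earlier in the compactness estimate of Lemma \ref{compact} and the kernel description of Lemma \ref{KERNEL}.

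I expect the apparent triviality of the argument to be deceptive, and the main conceptual obstacle to lie one level down, in the steps being cited. The crux is that $B_\varphi = \varphi(g_0)\,\mathrm{Id} + A_\varphi$ with $A_\varphi$ compact on each Banach space $E_n$ for $n\ge m$ (Lemma \ref{compact}), so when $\varphi(g_0)\ne 0$ the Fredholm alternative turns injectivity into surjectivity and boundedness of the inverse on each step space $E_n$. The delicate point is passing from the isomorphism property on each individual Banach space $E_n$ to the isomorphism property on the inductive limit $E=\operatorname*{ind}_{n\to}E_n$; this requires that the restrictions of $B_\varphi$ to the step spaces be compatible and that the inverse maps glue to a continuous inverse on $E$. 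That transition is handled inside the proof of Lemma \ref{isomorphism}, which the present theorem takes as given.

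Therefore the proof I would write is short: state the easy direction via $B_\varphi(g_0)=\varphi(g_0)g_0$, and for the hard direction chain Lemma \ref{injectiv} (to get injectivity from $\varphi(g_0)\ne 0$) into Lemma \ref{isomorphism} (to upgrade injectivity plus $\varphi(g_0)\ne 0$ to a topological isomorphism). The only thing to verify carefully is that the hypotheses of the cited lemmas match the present setting, where $g_0=Pe_\lambda$ indeed lies in some $E_m$, so that Lemma \ref{compact} applies; this is automatic since $g_0\in E$ by assumption.
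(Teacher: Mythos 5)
Your proposal is correct and follows essentially the same route as the paper: the forward direction via the identity $B_\varphi(g_0)=\varphi(g_0)g_0$ (which is exactly how Lemma \ref{injectiv} is proved, and which the paper cites for this step), and the reverse direction by chaining Lemma \ref{injectiv} into Lemma \ref{isomorphism}. Your additional commentary on where the real work lies (the Fredholm alternative on the step spaces $E_n$ and the passage to the inductive limit inside Lemma \ref{isomorphism}) accurately reflects the structure of the paper's argument.
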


\begin{proof}
	(i)$\Rightarrow$(ii): If $B_\varphi$ is a topological isomorphism $E$ onto $E$, then the operator $B_\varphi$
	is injective. Hence $\varphi(g_0)\ne 0$ by Lemma \ref{injectiv}.
	
	(ii)$\Rightarrow$(i): By Lemma \ref{injectiv} $B_\varphi$ is injective in $E$.
	By Lemma \ref{isomorphism} $B_\varphi:E\to E$ is a topological isomorphism "onto"\,.
\end{proof}

\medskip
We will prove a result on the factorization of nonzero operators $B_\varphi$.
Note, that the lattice of proper closed $D_{0,g_0}$-invariant subspaces of $E$ 
is not linearly ordered in the case when the function $g_0$ has zeros \cite[Theorem 2]{ITOGI17}.
This significantly affects factorization.

For polynomials $q, r\in\mathcal D(P)$ we denote by $(q,r)_1$ the greatest common 
divisor $d$ of $q$ and $r$ with $d(0)=1$.

\begin{thm} \label{RASCHEPL}Let $\varphi\in E'$, $\varphi\ne 0$ and $\varphi(g_0)=0$. 
	Then either  
	there exist $\psi\in E'$, 
	$n\in\mathbb N$, for which $B_\psi$ is a topological isomorphism $E$ onto $E$
	and $B_\varphi=D_{0, g_0}^n B_\psi$,
	or there are a polynomial $q\in\mathcal D(P)$ of degree greater or equal to $1$, an integer $n\ge 0$, $\psi\in E'$, 
	such that $B_\psi$ is a topological isomorphism $E$ onto $E$
	and $B_\varphi= B_{\delta(q)} D_{0,g_0}^n B_\psi$.	
\end{thm}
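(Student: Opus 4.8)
The plan is to read the structure of $B_\varphi$ off its kernel. Put $S:=\mathrm{Ker}\,B_\varphi$. Since $\varphi\ne0$, Theorem \ref{ALGEBRA} gives $B_\varphi\ne0$, and since $\varphi(g_0)=0$, Lemma \ref{injectiv} shows $B_\varphi$ is not injective; hence $S$ is a proper closed $D_{0,g_0}$-invariant subspace, and $g_0\in S$ by Lemma \ref{KERNEL}. By Lemma \ref{INVARIANT} there are only two shapes for $S$: either $S=qe_\lambda\mathbb{C}[z]_m$ is finite-dimensional, or $S=qE$ with $q\in\mathcal D(P)$, $\deg q\ge1$. These two shapes will yield the two alternatives of the theorem.

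The engine is division by $D_{0,g_0}$ on the right. Let $M$ be multiplication by the variable; then $D_{0,g_0}M=\mathrm{Id}$ while $MD_{0,g_0}(f)=f-f(0)g_0$. Whenever $g_0\in\mathrm{Ker}\,B_\psi$, the operator $T:=B_\psi M$ satisfies $D_{0,g_0}T=B_\psi=TD_{0,g_0}$ (the first equality because $B_\psi$ commutes with $D_{0,g_0}$ and $D_{0,g_0}M=\mathrm{Id}$, the second because $B_\psi g_0=0$). Thus $T\in\mathcal K(D_{0,g_0})$ and, by Theorem \ref{ALGEBRA}, $T=B_{\psi_1}$ for some $\psi_1\in E'$, giving $B_\psi=D_{0,g_0}B_{\psi_1}$. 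This manufactures one factor of $D_{0,g_0}$ with no appeal to the open mapping theorem.

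I would first dispose of the finite-dimensional case $S=qe_\lambda\mathbb{C}[z]_m$. Here the division can be iterated. Writing $B_{\psi_1}=B_{\varphi_k}$ at the $k$-th step, the relation $B_{\varphi_{k-1}}=B_{\varphi_k}D_{0,g_0}$ gives $\mathrm{Ker}\,B_{\varphi_k}=D_{0,g_0}(\mathrm{Ker}\,B_{\varphi_{k-1}})$, and since $\mathbb{C}g_0=\mathrm{Ker}\,D_{0,g_0}\subseteq\mathrm{Ker}\,B_{\varphi_{k-1}}$ each step drops the dimension of the kernel by exactly one. As long as the kernel is non-zero the operator is not injective, so Lemma \ref{injectiv} forces $\varphi_k(g_0)=0$ and the process continues; it therefore runs for exactly $N:=\dim S$ steps and halts at $\varphi_N$ with $\mathrm{Ker}\,B_{\varphi_N}=\{0\}$, i.e. $\varphi_N(g_0)\ne0$. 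By Theorem \ref{ISOMORPHISM} $B_{\varphi_N}$ is then an isomorphism, and commutativity of $\mathcal K(D_{0,g_0})$ yields $B_\varphi=D_{0,g_0}^{N}B_{\varphi_N}$, which is the first alternative with $\psi=\varphi_N$ and $n=N$; as a by-product $S=\mathrm{Ker}\,D_{0,g_0}^N$, so in fact $q=P$ here.

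The remaining case $S=qE$, $\deg q\ge1$, is where the genuine difficulty lies and it produces the second alternative. Division by $D_{0,g_0}$ no longer progresses, since $D_{0,g_0}(qE)=qE$ leaves the kernel unchanged; instead one compares $B_\varphi$ with $B_{\delta(q)}$, whose kernel is exactly $qE$ by Lemma \ref{prodelty}. Two facts set up the comparison: every $B_\eta\in\mathcal K(D_{0,g_0})$ maps $qE$ into itself (at a zero $\mu$ of $q\mid P$ both $g_0$ and $M(f)$ vanish to the relevant order, so by formula (\ref{SOOTNOSHENIE}) so does $B_\eta(f)$), whence $B_\varphi$ is finite-dimensional of rank $\deg q$; and $B_\varphi$ is nilpotent, because applying Lemma \ref{injectiv} to $\varphi-\alpha\delta_{0,0}$ (the identity being $B_{\delta_{0,0}}$, since $T_{z,g_0}(f)(0)=f(z)$) shows $B_\varphi-\alpha\,\mathrm{Id}$ is injective for every $\alpha\ne0$, so $B_\varphi$ has no non-zero eigenvalue. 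The main obstacle is then to show that $B_\varphi$ and $B_{\delta(q)}$ share the same range, a finite-dimensional $D_{0,g_0}$-invariant subspace on which $D_{0,g_0}$ acts invertibly, and to lift the resulting comparison of the two induced maps $E/qE\to\mathrm{Im}\,B_{\delta(q)}$ to a bona fide factorization $B_\varphi=B_{\delta(q)}D_{0,g_0}^{n}B_\psi$ with $B_\psi$ an isomorphism. Here the power $D_{0,g_0}^{n}$ supplies precisely the automorphisms of the range that $D_{0,g_0}$ realizes there, and Theorem \ref{ISOMORPHISM} is invoked once more to certify $\psi(g_0)\ne0$.
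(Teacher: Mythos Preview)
Your treatment of the finite-dimensional case is complete and cleaner than the paper's. Using the explicit right inverse $M$ to $D_{0,g_0}$ and observing that $T:=B_{\varphi_k}M$ again lies in $\mathcal K(D_{0,g_0})$ whenever $\varphi_k(g_0)=0$ is an elegant way to peel off factors of $D_{0,g_0}$ one at a time, with the kernel dimension dropping by exactly one at each step; the paper instead fixes the maximal $m$ with $Pe_\lambda\mathbb C[z]_m\subset S$, invokes an abstract continuous right inverse to $D_{0,g_0}^{m+1}$, and then argues by contradiction that $q=P$. Both arrive at $B_\varphi=D_{0,g_0}^{N}B_\psi$ with $B_\psi$ an isomorphism, but yours is more constructive.

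The genuine gap is the case $S=qE$. You correctly diagnose why iterated division by $D_{0,g_0}$ stalls, and the observations that $B_\varphi$ has rank $\deg q$ and that $B_\varphi-\alpha\,\mathrm{Id}$ is an isomorphism for $\alpha\ne0$ are fine, but the programme you outline --- show $\mathrm{Im}\,B_\varphi=\mathrm{Im}\,B_{\delta(q)}$, compare the induced isomorphisms $E/qE\to\mathrm{Im}\,B_{\delta(q)}$, and lift the comparison to a factorization of the required shape --- is neither carried out nor clearly feasible. There is no reason the two images should coincide (there are several $\deg q$-dimensional $D_{0,g_0}$-invariant subspaces when $\deg q<\deg P$), and even if they did, the automorphisms of a finite-dimensional space commuting with $D_{0,g_0}$ are not exhausted by the operators $D_{0,g_0}^n$, so the sentence about $D_{0,g_0}^n$ ``supplying precisely the automorphisms'' does not stand.

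The paper bypasses all of this by working with functionals rather than operators. Since $\mathrm{Ker}\,B_\varphi=\mathrm{Ker}\,B_{\delta(q)}=qE$ and $\varphi(f)=B_\varphi(f)(0)$ vanishes on $qE$, one chooses a continuous right inverse $R_0$ to $B_{\delta(q)}:E\to B_{\delta(q)}(E)$ (the target is finite-dimensional), sets $\xi_0:=\varphi\circ R_0$ on the image, and extends by Hahn--Banach to $\xi\in E'$; then $\xi\circ B_{\delta(q)}=\varphi$ because $R_0B_{\delta(q)}-\mathrm{Id}$ lands in the common kernel, and commutativity of $\mathcal K(D_{0,g_0})$ turns this into $B_\varphi=B_{\delta(q)}B_\xi$. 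Now one applies the already-established analysis to $B_\xi$: either $\xi(g_0)\ne0$ and we are done with $n=0$, or $\xi(g_0)=0$ and $\mathrm{Ker}\,B_\xi$ is again of one of the two shapes; the second shape is excluded (it would force $B_\varphi=B_{\delta(q)}B_{\delta(r)}B_\tau=0$), so the first case gives $B_\xi=D_{0,g_0}^nB_\psi$ with $B_\psi$ an isomorphism. This functional-level factoring is the missing idea in your argument.
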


\begin{proof} We will exploit Lemma \ref{INVARIANT}.
	First of all, $S={\rm Ker}\,B_\varphi$ is a proper closed $D_{0,g_0}$-invariant subspace of $H(Q)$.
	We suppose that $S=qe_\lambda\mathbb C[z]_n$ for some $q\in\mathcal D(P)$
	and some integer $n\ge 0$, for which $n\ge {\rm deg}(P) - {\rm deg}(q) - 1$. We will show, that $q=P$.
	Assume that ${\rm deg}(q)<{\rm deg}(P)$.  
	Since $\varphi(g_0)=0$, then $B_\varphi(g_0)=\varphi(g_0)g_0=0$.
	Consequently, $g_0\in S$, and hence $Pe_\lambda\mathbb C[z]_0\subset S$.
	Choose the greatest integer $m\ge 0$, such that
	$Pe_\lambda\mathbb C[z]_m\subset S$.
	Since the space ${\rm Ker}\, D_{0,g_0}^{m+1}=Pe_\lambda\mathbb C[z]_m$ is finite-dimensional
	and the operator $D_{0,g_0}^{m+1}: E\to E$ is surjective, then there exists a continuous linear right inverse
	$R: E\to E$ to $D_{0,g_0}^{m+1}$ \cite[Theorem 10.3]{MEIVOGT}. 
	Then $RD_{0,g_0}^{m+1}(f) - f\in {\rm Ker}\, D_{0,g_0}^{m+1}$ for all $f\in E$. 
	Note that $\varphi=0$ on ${\rm Ker}\,B_\varphi$, since
	$\varphi(f)=B_\varphi(f)(0)$ for each $f\in E$.
	Consequently,
	$\gamma D_{0,g_0}^{m+1}=\varphi$ for the functional $\gamma:=\varphi R\in E'$.
	For each $z\in\mathbb C$, $f\in E$ we obtain:
 	$$
	B_\varphi(f)(z)=\varphi(T_{z,g_0}(f))=\gamma\left(D_{0,g_0}^{m+1}\left(T_{z,g_0}(f)\right)\right)=
	\gamma\left(T_{z,g_0}\left(D_{0,g_0}^{m+1}(f)\right)\right)=
	$$
	$$
	B_\gamma\left(D_{0,g_0}^{m+1}(f)\right)(z)= D_{0,g_0}^{m+1} B_\gamma(f)(z),
	$$
	i.\,e. $B_\varphi=D_{0,g_0}^{m+1} B_\gamma$. In addition, $\gamma(g_0)=0$.
	In fact, otherwise $B_\gamma$ is injective by Lemma \ref{injectiv} and
	${\rm Ker}B_\varphi= {\rm Ker}\,D_{0,g_0}^{m+1}=Pe_\lambda\mathbb C[z]_m$. A
	contradiction with ${\rm Ker}\,B_\varphi=qe_\lambda\mathbb C[z]_n$.
	Hence there exist $s\in\mathbb N$ and $\xi\in E'$, for which
	$B_\gamma= D_{0,g_0}^s B_\xi$, and consequently, $B_\varphi= D_{0,g_0}^{m+s+1} B_\xi$.
	This is a contradiction with the maximality of $m$.
	Thus, $q=P$. As $\psi$ we take a functional, defined as $\gamma$ above.
	By Theorem \ref{ISOMORPHISM} $B_\psi$ is a topological isomorphism $E$ onto $E$.
	
	Let now $S=q E$ for a polynomial $q\in\mathcal D(P)$ of degree greater or equal to $1$.
	By Lemma \ref{prodelty} ${\rm Ker} B_{\delta(q)}=q E$. 
	Since ${\rm Ker}\,B_{\delta(q)}$ has the finite codimension, then
	the image $B_{\delta(q)}(E)$ is finite-dimensional.
	From this it follows that
	$B_{\delta(q)}(E)$ is a Fr\'echet space with the topology induced from $E$. 
Consequently, there is a continuous linear right inverse
	$R_0: B_{\delta(q)}(E)\to E$ to $B_{\delta(q)}: E\to B_{\delta(q)}(E)$.
	Define a functional $\xi_0$ on $B_{\delta(q)}(E)$ as $\xi_0:=\varphi R_0$.
	Then $\xi_0$ is continuous and linear on $B_{\delta(q)}(E)$ with the topology,
	induced from $E$.
	By the Hahn-Banach Theorem $\xi_0$  can be extended to a continuous linear functional 
	$\xi$ on $E$.
	Since $R_0 B_{\delta(q)}(f) - f\in {\rm Ker}\,B_{\delta(q)}={\rm Ker}\,B_\varphi$ for all $f\in E$,
	then $\xi_0 B_{\delta(q)}=\varphi$ and also $\xi B_{\delta(q)}=\varphi$. 
As in the first case, from this we infer $B_\varphi= B_{\delta(q)} B_\xi$.
	If $\xi(g_0)\ne 0$, then the lemma is proved (with $\psi=\xi$ and $n=0$). 
	If $\xi(g_0)=0$, then we factorize $B_\xi$ in the form $B_\xi=D_{0,g_0}^n B_\psi$, where $n\in\mathbb N$,
	$\psi\in E'$, $\psi(g_0)\ne 0$, or
	$B_\xi=B_{\delta(r)} B_\tau$, $\tau\in E'$, $r\in\mathcal D(P)$. Second decomposition is not valid, 
	since otherwise $B_\varphi=B_{\delta(q)} B_{\delta(r)} B_\tau=0$ by Lemma \ref{delty}.
	In addition, $B_\psi$ is a topological isomorphism $E$ onto $E$ by 
	Theorem \ref{ISOMORPHISM}.
\end{proof}

\begin{cor}
	Each nonzero operator from $\mathcal K(D_{0,g_0})$, which is not
	finite-dimensional, is surjective and has a continuous linear right inverse
\end{cor}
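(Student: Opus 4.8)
The plan is to reduce everything to the two structural theorems already established, namely Theorem \ref{ISOMORPHISM} and Theorem \ref{RASCHEPL}. By Theorem \ref{ALGEBRA} every nonzero operator in $\mathcal K(D_{0,g_0})$ has the form $B_\varphi$ with $\varphi\in E'\setminus\{0\}$, so I would fix such a $\varphi$ and split the argument according to the value of $\varphi(g_0)$.

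First, if $\varphi(g_0)\ne 0$, then Theorem \ref{ISOMORPHISM} immediately gives that $B_\varphi$ is a topological isomorphism of $E$ onto $E$; hence it is surjective and its inverse $B_\varphi^{-1}$ serves as the required continuous linear right inverse. This disposes of the isomorphism case, in which $B_\varphi$ is visibly not finite-dimensional.

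Next, if $\varphi(g_0)=0$, I would invoke Theorem \ref{RASCHEPL}, which supplies one of two factorizations. The point is that exactly one of them produces finite-dimensional operators and is therefore ruled out by hypothesis, while the other yields surjectivity together with a right inverse. In the factorization $B_\varphi=B_{\delta(q)}D_{0,g_0}^nB_\psi$ the image is contained in $B_{\delta(q)}(E)$, and since ${\rm Ker}\,B_{\delta(q)}=qE$ has finite codimension (as noted in the proof of Theorem \ref{RASCHEPL}, using Lemma \ref{prodelty}), this image is finite-dimensional; thus $B_\varphi$ is finite-dimensional and this case cannot occur for the operators we consider. In the remaining factorization $B_\varphi=D_{0,g_0}^nB_\psi$ with $B_\psi$ an isomorphism, surjectivity follows at once from $B_\varphi(E)=D_{0,g_0}^n(B_\psi(E))=D_{0,g_0}^n(E)=E$.

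The one place where some care is needed is the construction of the right inverse in this last case, and I expect it to be the only genuinely nontrivial point. Since ${\rm Ker}\,D_{0,g_0}^n=g_0\mathbb C[z]_{n-1}$ is finite-dimensional and $D_{0,g_0}^n$ is surjective, the splitting theorem \cite[Theorem 10.3]{MEIVOGT} furnishes a continuous linear right inverse $R$ to $D_{0,g_0}^n$; then $B_\psi^{-1}R$ is the desired right inverse of $B_\varphi$, because $B_\varphi B_\psi^{-1}R=D_{0,g_0}^nR={\rm id}_E$. Everything else is a matter of reading off the consequences of the two main theorems, so the corollary is essentially a bookkeeping statement built on top of them.
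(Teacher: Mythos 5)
Your proof is correct and follows exactly the route the paper intends: the corollary is stated without proof as an immediate consequence of Theorems \ref{ISOMORPHISM} and \ref{RASCHEPL}, and your case analysis (isomorphism when $\varphi(g_0)\ne 0$; when $\varphi(g_0)=0$, the factorization $B_\varphi=B_{\delta(q)}D_{0,g_0}^nB_\psi$ forces finite-dimensionality, leaving $B_\varphi=D_{0,g_0}^nB_\psi$ with right inverse $B_\psi^{-1}R$) is precisely the intended deduction, using the same ingredients (surjectivity of $D_{0,g_0}^n$, its finite-dimensional kernel, and \cite[Theorem 10.3]{MEIVOGT}) that the paper itself employs in the proof of Theorem \ref{RASCHEPL}.
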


\begin{rem} Let ${\rm Lat}(D_{0,g_0}, E)$ be the lattice of all closed $D_{0,g_0}$-invariant
	subspaces of $E$. From the proof of Theorem \ref{RASCHEPL} it follows, that 
	the set of kernels of all operators $B_\varphi$, $\varphi\in E'$, coincides with
	${\rm Lat}(D_{0,g_0}, E)$ if and only if the function $g_0=P e_\lambda$ has no zeros, i.\,e. $P\equiv 1$.
\end{rem}

\begin{rem} \label{OBOBINT} V.A. Tkachenko \cite{TKACH} investigated properties
	of the commutant of the operator of generalized integration
	$\mathcal I$ in the strong dual of a countable inductive limit of weighted Banach spaces
	of entire functions, 
	whose growth is determined with
	the help of a 
	$\rho$-trigonometric convex function ($\rho>0$) with values in $(-\infty,+\infty]$.
	The operator $\mathcal I$ is the dual map
	(we use notations of this article)
	to the operator $D_{0,g_0}$ for a function $g_0=e^{\mathcal P}$, where $\mathcal P$ is a polynomial. 
	This function has no zeros in $\mathbb C$. The operator $\mathcal I$ is unicellular. In the unicellular case
	of our article, if $g_0(z)=e^{\lambda z}$, Theorems
	\ref{ISOMORPHISM} and \ref{RASCHEPL} follow from statements, proved by V.A. Tkachenko 
	\cite[\S~4, Property d); Theorem 2]{TKACH}.
	
\end{rem}

\section{The generalized Duhamel product}

We will apply Theorem \ref{ISOMORPHISM} to a multiplication in $H(Q)$.
Let $g_0=Pe_\lambda$, where $\lambda\in Q$ and $P$ is a polynomial such that
$P(0)=1$.
By \cite[\S~4]{ITOGI17} $\mathcal F^t(\varphi\otimes\psi)=
\mathcal F^t(\varphi)\ast\mathcal F^t(\psi)$  for all $\varphi, \psi\in E'$, where
$\ast$ is an associative and commutative multiplication in $H(Q)$. 

For a polynomial $r(z)=\sum\limits_{j=0}^n b_jz^j$ we define
the differential operator $r(D)(f):=\sum\limits_{j=0}^n b_j f^{(j)}$.
Note, that $\varphi(P e_\lambda)=P(D)\left(\mathcal F^t(\varphi)\right)(\lambda)$, $\varphi\in E'$.

Let $m:={\rm deg}(P)\ge 1$. We introduce polynomials $p_j$, $0\le j\le m-1$, for which
$\sum\limits_{j=0}^{m-1}p_j(t) z^j=
\frac{P(t)-P(z)}{t-z}$. 
Set $\widetilde p_j(t):=t p_j(t)$, $0\le j\le m-1$, $t\in\mathbb C$.
As shown in \cite[\S~4]{ITOGI17}, for all $f, h\in H(Q)$, $z\in Q$
$$
(f\ast h)(z)= 
$$
$$
h(\lambda) P(D)(f)(z) +
\int\limits_{\lambda}^z P(D)(f)(\xi) h'(z+\lambda-\xi)d\xi -
\sum\limits_{j=0}^{m-1}\widetilde p_j(D)(f)(z) h^{(j)}(\lambda),
$$
where the integral is taken along the line segment from $\lambda$ to $z$.
Employing integration by parts and substitation 
$\eta=z+\lambda-\xi$, for $f, h\in H(Q)$, $z\in Q$, we infer
$$
(f\ast h)(z)=
$$
$$
P(D)(f)(\lambda)h(z) + \int\limits_{\lambda}^z \left(P(D)(f)\right)'(\eta) h'(z+\lambda-\eta)d\xi -
\sum\limits_{j=0}^{m-1}\widetilde p_j(D)(f)(z) h^{(j)}(\lambda).
$$
This expression of $\ast$ emphasizes the significence of the factor $P(D)(f)(\lambda)$.
For $P\equiv 1$
$$
(f\ast h)(z)=f(\lambda)h(z) + \int\limits_\lambda^z f'(\eta) h(z+\lambda-\eta) d\eta, \,\, z\in Q,\, f, h\in H(Q).
$$
If $P\equiv 1$, $\lambda=0$, then $f\ast h$ is the Duhamel product.  
In the space of all holomorphic functions on a domain in $\mathbb C$, star-shaped with respect to the origin,
this product was investigated at first by N. Wigley \cite{WIGLEY}. 

Define for $f\in H(Q)$ the Duhamel operator $G_f(h):=f\ast h$, $h\in H(Q)$, which
is continuous and linear in $H(Q)$. Note that $\widehat{S_{\varphi}(\psi)}= G_{\widehat\varphi}\left(\widehat\psi\right)$ for all
$\varphi, \psi\in E'$.
Applying standard dual arguments to Theorem \ref{ISOMORPHISM}, we get the following result:

\begin{cor} For $f\in H(Q)$ the operator
	$G_f$ is a topological isomorphism $H(Q)$ onto $H(Q)$ if and only if $P(D)(f)(\lambda)\ne 0$.
\end{cor}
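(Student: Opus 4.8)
The plan is to transport the question about $G_f$ to a question about the operators $B_\varphi$ via the Fourier--Laplace machinery, and then invoke Theorem~\ref{ISOMORPHISM}. First I would use that $\mathcal F^t\colon E'\to H(Q)$ is a topological isomorphism of the strong dual of $E$ onto $H(Q)$: given $f\in H(Q)$ there is a unique $\varphi\in E'$ with $\widehat\varphi=f$. The intertwining relation $\widehat{S_\varphi(\psi)}=G_{\widehat\varphi}(\widehat\psi)$, valid for all $\psi\in E'$, says precisely that $\mathcal F^t\circ S_\varphi=G_{\widehat\varphi}\circ\mathcal F^t$; since $\mathcal F^t(\psi)$ exhausts $H(Q)$ as $\psi$ runs over $E'$, this rewrites as $G_f=\mathcal F^t\circ S_\varphi\circ(\mathcal F^t)^{-1}$. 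Because $\mathcal F^t$ is a topological isomorphism, $G_f$ is a topological isomorphism of $H(Q)$ if and only if $S_\varphi$ is a topological isomorphism of the strong dual $E'_b$.

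Next I would pass from $S_\varphi$ back to $B_\varphi$ by duality. Recall that $S_\varphi$ is the adjoint of $B_\varphi$ with respect to the dual system $(E,E')$, i.e. $S_\varphi=B_\varphi^{\,t}$. The point is that an operator on $E$ is a topological isomorphism exactly when its transpose is a topological isomorphism of the strong dual. If $B_\varphi$ is a topological isomorphism, then $B_\varphi^{-1}$ is continuous and $(B_\varphi^{-1})^t=(B_\varphi^{\,t})^{-1}$ is the continuous inverse of $S_\varphi$ on $E'_b$, so $S_\varphi$ is an isomorphism. For the converse I would use reflexivity of $E$ (a countable inductive limit of weighted Banach spaces of the present type is a reflexive \mbox{(DFS)}-space, whose strong dual $E'_b\cong H(Q)$ is a Fr\'echet--Schwartz space): under the canonical identification $E''_b=E$ one has $B_\varphi^{\,tt}=B_\varphi$, so if $S_\varphi=B_\varphi^{\,t}$ is an isomorphism then so is $B_\varphi$. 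Thus $G_f$ is an isomorphism of $H(Q)$ if and only if $B_\varphi$ is an isomorphism of $E$.

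Finally I would identify the scalar condition. By the displayed identity $\varphi(Pe_\lambda)=P(D)(\mathcal F^t(\varphi))(\lambda)$ we have $\varphi(g_0)=P(D)(f)(\lambda)$, since $g_0=Pe_\lambda$ and $\widehat\varphi=f$. Theorem~\ref{ISOMORPHISM} states that $B_\varphi$ is a topological isomorphism of $E$ onto $E$ if and only if $\varphi(g_0)\ne0$. Chaining the three equivalences yields that $G_f$ is a topological isomorphism of $H(Q)$ onto $H(Q)$ iff $B_\varphi$ is one on $E$ iff $\varphi(g_0)\ne0$ iff $P(D)(f)(\lambda)\ne0$, which is the assertion.

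The step I expect to be the crux is the duality passage from $S_\varphi$ to $B_\varphi$: one must know that transposition is a bijection between the topological automorphisms of $E$ and those of $E'_b$. This is exactly where the structural hypotheses on $E$ (reflexivity, the \mbox{(DFS)}/\mbox{(FS)} duality with $H(Q)$) enter; granting these, it is the ``standard dual argument'' already alluded to before the statement, and it is the only non-formal ingredient beyond Theorem~\ref{ISOMORPHISM} and the intertwining relation.
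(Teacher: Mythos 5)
Your proposal is correct and is essentially the paper's own argument: the paper proves this corollary by ``applying standard dual arguments to Theorem~\ref{ISOMORPHISM}'', and what you wrote out --- conjugating $G_f$ to $S_\varphi=B_\varphi^{\,t}$ via the isomorphism $\mathcal F^t$, using reflexivity of $E$ to see that $B_\varphi$ is a topological isomorphism iff its transpose is, and identifying $\varphi(g_0)=P(D)(f)(\lambda)$ --- is exactly the content of those standard dual arguments. No gaps; you have simply made explicit what the paper leaves implicit.
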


In the case $g_0\equiv 1$, i.\,e. $P\equiv 1$, $\lambda=0$, this statement was proved by N. Wigley
\cite{WIGLEY} (for a domain $Q$, which is star-shaped with respect to the origin).

\noindent
Olga A. Ivanova \\
Southern Federal University, Vorovich Institute of Mathematics, Mechanics and Computer Sciences \\
344090, Russia, Rostov on Don, Mil'chakova St. 8-a\\
e-mail: {\sf neo$_{-}$ivolga@mail.ru}

\bigskip 
\noindent
Sergej N. Melikhov \\
Southern Federal University, Vorovich Institute of Mathematics, Mechanics and Computer Sciences \\
344090, Russia, Rostov on Don, Mil'chakova St. 8-a\\
Southern Institute of Mathematics -- the Affilate of Vladikavkaz Scientific Centre of RAS\\
362027, Vladikavkaz, Vatutina St., 53\\
e-mail: {\sf melih@math.rsu.ru}



\begin{thebibliography}{99}
	
	\bibitem{BINDERMAN}
	V.G. Binderman,
	"Functional shifts induced by 
	right invertible operators"\,,
	Math. Nachr., 157 (1992), 211--224 .
	
	\bibitem{Dimovsky} I.N. Dimovski and V.Z. Hristov, 
	"Commutants of the Pommiez operator"\,,
	Int. J. Math. and Math. Science, 8 (2005), no. 8, 1239--1251.
	
	\bibitem{HERM}
	L. H\"ormander, {\it An introduction to complex analysis in several variables},
	North-Holland Math. Library, vol. 7, North-Holland Publ. Co., Amsterdam, 1973.
	
	\bibitem{Ufa} O.A. Ivanova and S.N. Melikhov, 
	"On A.~F. Leont'ev's interpolating function"\,,
	Ufa Mathematical Journal, 6 (2014), no. 3, 17--27.
	
	\bibitem{AA} O.A. Ivanova and S.N. Melikhov, 
	"On operators which commute with the Pommiez type operator in weighted spaces of entire functions"\,,
	St. Petersburg Math. J., 28 (2017), no. 2, 209--224.
	
	\bibitem{CAOT} O.A. Ivanova and S.N. Melikhov,
	"On the completeness of orbits of a Pommiez
	operator in weighted (LF)-spaces of entire functions"\,, Complex Analysis and
	Operator Theory, 11 (2017), 1407--1424.
	
	
	\bibitem{ITOGI17} O.A. Ivanova and S.N. Melikhov, "On invariant subspaces of the Pommiez operator
	in the spaces of entire functions of exponential type"\,,  J. of Math. Sci.,
	241 (2019), no. 6, 760--769.
	
	\bibitem{KARAEV}
	M.T. Karaev, "Duhamel Algebras and Applications"\,,  Funct. Anal. Appl., 52 (2018),  no. 1, 
	1--8.
	
	\bibitem{LEONTEV} A.F. Leont'ev, "On the uniqueness property"\,, Math USSR-Sbornik,
	1 (1967), 209--220.
	
	\bibitem{MEIVOGT} R. Meise and D. Vogt, 
	{\it Introduction to Functional Analysis}, Oxford: Clarendon, 1997.
	
	\bibitem{TKACH}
	V.A. Tkachenko, 
	"Operators that commute with generalized
	integration in spaces of analytic functionals"\,,
	Math. Notes, 22 (1979),  no. 2, 141--146. 
	
	\bibitem{WIGLEY} N. Wigley, "The Duamel product of analytic functions"\,,
	Duke Math. J., 41 (1974), 211-217.
	
\end{thebibliography}
\end{document}